\definecolor{todo}{rgb}{1,0,0}
\newtheorem{thm}{Theorem}[section]
\newtheorem*{thm*}{Theorem}
\newtheorem{cor}[thm]{Corollary}
\newtheorem{prop}[thm]{Proposition}
\newtheorem{lemma}[thm]{Lemma}
\newtheorem{question}[thm]{Question}
\theoremstyle{definition}
\newtheorem{defn}[thm]{Definition}
\newtheorem{ex}[thm]{Example}
\theoremstyle{remark}
\newtheorem{rmk}[thm]{Remark}
\let\c@equation\c@thm
\let\oldmarginpar\marginpar
\renewcommand\marginpar[1]{\-\oldmarginpar[\raggedleft\footnotesize #1]%
{\raggedright\footnotesize #1}}
\newcommand{\Top}{\mathrm{Top}}
\newcommand{\Alg}{\mathrm{Alg}}
\newcommand{\Mod}{\mathrm{Mod}}
\newcommand{\Fun}{\mathrm{Fun}}
\newcommand{\Sp}{\mathrm{Sp}}
\newcommand{\C}{\mathrm{Alg}^{\mathrm{idem}}}
\newcommand{\Tor}{\mathrm{Tor}}
\newcommand{\Hom}{\mathrm{Hom}}
\renewcommand{\lim}{\mathrm{lim}}
\newcommand{\pdim}{\mathrm{projdim}_{BP_*}}
\newcommand{\Lh}{L_{\infty}}
\newcommand{\tel}{\mathrm{Tel}}
\newcommand{\cC}{\mathcal{C}}
\newcommand{\Monad}{\mathrm{Monads}}
\newcommand{\F}{\mathbb{F}}
\newcommand{\Z}{\mathbb{Z}}
\newcommand{\N}{\mathbb{N}}
\newcommand{\D}{\mathbb{D}}
\newcommand{\la}{\langle}
\newcommand{\ra}{\rangle}
\newcommand{\BP}[1]{BP{\langle #1 \rangle}}
\newcommand{\Es}[1]{\mathbb{E}_{#1}}
\newcommand*{\doublerightarrow}[2]{\mathrel{
  \settowidth{\@tempdima}{$\scriptstyle#1$}
  \settowidth{\@tempdimb}{$\scriptstyle#2$}
  \ifdim\@tempdimb>\@tempdima \@tempdima=\@tempdimb\fi
  \mathop{\vcenter{
    \offinterlineskip\ialign{\hbox to\dimexpr\@tempdima+1em{##}\cr
    \rightarrowfill\cr\noalign{\kern.5ex}
    \rightarrowfill\cr}}}\limits^{\!#1}_{\!#2}}}
\newcommand*{\triplerightarrow}[1]{\mathrel{
  \settowidth{\@tempdima}{$\scriptstyle#1$}
  \mathop{\vcenter{
    \offinterlineskip\ialign{\hbox to\dimexpr\@tempdima+1em{##}\cr
    \rightarrowfill\cr\noalign{\kern.5ex}
    \rightarrowfill\cr\noalign{\kern.5ex}
    \rightarrowfill\cr}}}\limits^{\!#1}}}
\newcommand{\bA}{\mathbb{A}}
\newcommand{\bC}{\mathbb{C}}
\newcommand{\sB}{\mathscr{B}}
\newcommand{\sC}{\mathscr{C}}
\newcommand{\sP}{\mathscr{P}}
\newcommand{\sS}{\mathscr{S}}
\let\SK@label\label\fi
 \let\your@thm\@thm
 \def\@thm#1#2#3{\gdef\currthmtype{#3}\your@thm{#1}{#2}{#3}}
 \def\mylabel#1{{\let\your@currentlabel\@currentlabel\def\@currentlabel
  {\currthmtype~\your@currentlabel}
 \SK@label{#1@}}\label{#1}}
 \def\myref#1{\ref{#1@}}
\title{Chromatic completion}
\author{Tobias Barthel}
\address{Max Planck Institute for Mathematics, Bonn, Germany}
\email{tbarthel@mpim-bonn.mpg.de}
\date{\today}
\begin{document}

\begin{abstract}
We study the limit of the chromatic tower for not necessarily finite spectra, obtaining a generalization of the chromatic convergence theorem of Hopkins and Ravenel. Moreover, we prove that in general this limit does not coincide with harmonic localization, thereby answering a question of Ravenel's. 
\end{abstract}

\maketitle
\tableofcontents

\section{Introduction}\label{chromaticcompletion}

\subsection{Background and motivation}

The suspension spectrum functor $\Sigma^{\infty}_+\colon \Top \to \Sp$ exhibits the category $\Sp$ of spectra as the stabilization of the category of topological spaces, with right adjoint given by $\Omega^{\infty}\colon \Sp \to \Top$. 
The full subcategory of connective spectra admits then an equivalent description as the category of algebras for the corresponding monad $Q=\Omega^{\infty}\Sigma^{\infty}_+$.

Formally, the coalgebras for the comonad $\Sigma^{\infty}_+\Omega^{\infty}$ are the retracts of suspension spectra.  However, these seem to be difficult to describe intrinsically to the category of spectra. In \cite{kuhnsuspension}, Kuhn proves that the Snaith splitting is characteristic for retracts of suspension spectra, a result which subsequently has been refined by Klein \cite{kleinsuspension}, but not much is known beyond that.

\begin{thm}[Kuhn]
A connected spectrum $X$ is a retract of a suspension spectrum if and only if there is an equivalence
\[\Sigma^{\infty}_+\Omega^{\infty}X \simeq \bigvee_{n \ge 0} \D_nX,\]
where $\D_n$ denotes the $n$-th extended power functor on $\Sp$.
\end{thm}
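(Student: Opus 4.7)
\emph{Proof strategy.} The plan is to prove the two implications separately. The reverse implication is essentially formal, while the forward direction (the content of Kuhn's theorem) reduces to the classical Snaith splitting combined with the functoriality of extended powers.

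\emph{Reverse direction.} Suppose $\Sigma^\infty_+\Omega^\infty X \simeq \bigvee_{n \ge 0} \D_n X$. Since $\D_1 X \simeq X$ appears as a wedge summand of the right-hand side, $X$ is a retract of $\Sigma^\infty_+\Omega^\infty X$, which is by definition the suspension spectrum of the space $\Omega^\infty X$. Hence $X$ is a retract of a suspension spectrum.

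\emph{Forward direction.} Suppose $X$ is a retract of $\Sigma^\infty_+ Z$ via maps $i\colon X \to \Sigma^\infty_+ Z$ and $r\colon \Sigma^\infty_+ Z \to X$ with $r \circ i \simeq \id_X$; using connectivity of $X$, one may arrange $Z$ to be connected. The classical Snaith splitting provides a natural equivalence
\[
\Sigma^\infty_+\Omega^\infty \Sigma^\infty_+ Z \simeq \bigvee_{n \ge 0} \D_n(\Sigma^\infty_+ Z),
\]
natural in the spectrum argument $\Sigma^\infty_+ Z$. Applying $\Sigma^\infty_+\Omega^\infty$ to the pair $(i,r)$ exhibits $\Sigma^\infty_+\Omega^\infty X$ as a retract of $\Sigma^\infty_+\Omega^\infty\Sigma^\infty_+ Z$; under the Snaith equivalence and by naturality, the idempotent $\Sigma^\infty_+\Omega^\infty(i \circ r)$ corresponds to the wedge of idempotents $\bigvee_n \D_n(i \circ r)$. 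Since each $\D_n$ is a functor on spectra, it carries the retract pair $(i,r)$ to a retract pair exhibiting $\D_n X$ as the summand of $\D_n(\Sigma^\infty_+ Z)$ cut out by $\D_n(i \circ r)$. Wedge-summing the retracts then yields the desired equivalence $\Sigma^\infty_+\Omega^\infty X \simeq \bigvee_{n \ge 0} \D_n X$.

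\emph{Main obstacle.} The principal technical point is the spectrum-level naturality of the Snaith splitting: one requires naturality not merely with respect to maps of pointed spaces $Z \to Z'$, but with respect to arbitrary maps of spectra between the corresponding suspension spectra, so that the idempotent $i \circ r$ (which need not be induced by a self-map of $Z$) translates into a wedge of $\D_n$-applied idempotents. This requires a model of the splitting compatible with the symmetric monoidal structure on spectra, typically via an operadic or configuration-space presentation. Once this compatibility is established, the remainder of the argument is formal manipulation of retracts in a stable $\infty$-category.
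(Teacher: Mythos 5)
The paper does not actually prove this statement; it is quoted from Kuhn's work \cite{kuhnsuspension} as background, so there is no internal argument to compare yours against. Your reverse direction is correct and is the standard easy observation: the summand $\D_1X \simeq X$ exhibits $X$ as a retract of $\Sigma^{\infty}_+\Omega^{\infty}X$, which is the suspension spectrum of the space $\Omega^{\infty}X$.

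The forward direction, however, has a genuine gap, and it is precisely the point you flag as the ``main obstacle'': the Snaith splitting is natural only with respect to maps of \emph{spaces} $Z \to Z'$, not with respect to arbitrary stable maps between the associated suspension spectra. This is not a deficiency of the model that a better operadic or configuration-space construction would repair; the failure of stable naturality is a genuine phenomenon (of the same nature as the fact that the James--Hopf maps are not infinite loop maps, and closely tied to the filtration of the free $E_\infty$-space by number of particles, which stable self-maps do not preserve). Since the idempotent $i \circ r$ is a stable self-map of $\Sigma^{\infty}_+Z$ that need not be induced by a self-map of $Z$, you have no warrant for the claim that it corresponds to $\bigvee_n \D_n(i\circ r)$ under the splitting, and the retract argument collapses at exactly this step. (A secondary, fixable issue: the Snaith splitting requires a connected space, so one must work with $\Sigma^{\infty}Z$ for $Z$ connected rather than $\Sigma^{\infty}_+Z$, and splitting off the idempotent from $S^0 \vee \Sigma^{\infty}Z$ needs a small extra argument.) Kuhn's actual route is different in outline: being a retract of a suspension spectrum is equivalent to the counit $\Sigma^{\infty}\Omega^{\infty}X \to X$ admitting a section $s$, and from $s$ one constructs the candidate map $\bigvee_n \D_nX \to \Sigma^{\infty}_+\Omega^{\infty}X$ directly, by composing $\D_n(s)$ with the stable power maps $\D_n\Sigma^{\infty}_+\Omega^{\infty}X \to \Sigma^{\infty}_+\Omega^{\infty}X$ supplied by the $E_\infty$-space structure on $\Omega^{\infty}X$; that this map is an equivalence is then verified by a homology computation rather than by transporting the Snaith splitting along the retraction. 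To complete your proof you would need to replace the naturality claim by a construction of this kind.
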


Here we approach this problem from the point of view of chromatic homotopy theory. Informally speaking, suspension spectra often have properties similar to finite spectra. For example, Bousfield \cite{bousfieldkneq} (see also \cite{wilsonkneq}) shows that the notion of type generalizes to suspension spectra up to a discrepancy at height $0$. 

As another example, Hopkins and Ravenel~\cite{ssh} prove that suspension spectra are harmonic, i.e., local with respect to the wedge of all Morava $K$-theories, extending the analogous result for finite spectra established earlier by Ravenel \cite{ravconj}. Harmonic localization $L_{\infty}$ in turn is closely related to the functor $\bC$ that sends a spectrum $X$ to the limit of its chromatic tower
\[\xymatrix{\cdots \ar[r] & L_nX \ar[r] & L_{n-1} \ar[r] & \cdots \ar[r] & L_0X,}\]
where $L_n$ denotes Bousfield localization with respect to height $n$ Johnson--Wilson theory $E(n)$. Viewing this construction as being a chromatic analogoue of $p$-adic completion $M \mapsto \lim_n(M \otimes \Z/p^n)$, Salch coined the term \emph{chromatic completion} for $\bC$. The chromatic convergence theorem then says that finite spectra are chromatically complete. 

\begin{thm}[Hopkins--Ravenel]\mylabel{chromaticconvergence}
If $X$ is a finite spectrum, then the limit of the chromatic tower $\cdots \to L_nX \to \cdots L_0X$ is equivalent to $X$.
\end{thm}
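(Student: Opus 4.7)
The plan is to follow the classical Hopkins--Ravenel strategy, reducing the problem to chromatic convergence for the $p$-local sphere via the smash product theorem, and then handling that case by a $BP$-theoretic computation.

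First, fix a prime $p$: since $L_n$ is $p$-local (being Bousfield localization at the $p$-local Johnson--Wilson spectrum $E(n)$) and rationalization is trivially chromatically complete, an arithmetic fracture argument reduces the question to proving $X \we \lim_n L_n X$ for a finite $p$-local spectrum $X$.

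Second, invoking the smash product theorem of Hopkins and Ravenel, namely $L_n X \we X \wedge L_n S^0_{(p)}$ for all $X$, and using that a finite spectrum $X$ is dualizable so that $X \wedge -$ preserves homotopy limits, we obtain
\[
\lim_n L_n X \we X \wedge \lim_n L_n S^0_{(p)}.
\]
The general case thus reduces to the single case $X = S^0_{(p)}$: it suffices to show that the fibres $C_n := \fib(S^0_{(p)} \to L_n S^0_{(p)})$ satisfy $\lim_n C_n \we 0$.

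Third, I would analyse the tower $\{C_n\}$ via its $BP$-homology. The monochromatic layers $M_n S^0 = \fib(L_n S^0 \to L_{n-1} S^0)$ are $K(n)$-local in character, and Landweber's theorem identifies their $BP_*$-homology as a $v_n$-periodic quotient arising from the invariant regular ideals $I_n = (p, v_1, \dots, v_{n-1})$. Running the chromatic spectral sequence shows that $BP_* C_n$ is supported in strictly increasing chromatic filtration as $n$ grows, and in particular the connectivity of $BP \wedge C_n$ tends to infinity. A standard $\lim^1$ Mittag-Leffler estimate, applied to the Adams--Novikov tower, upgrades this to the vanishing $\lim_n C_n \we 0$.

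The principal obstacle is the third step: controlling the connectivity of $C_n$ uniformly in $n$, which occupies substantial parts of Ravenel's orange book and is precisely where the $v_n$-periodic structure really has to be pinned down. The smash product theorem invoked in step two is itself an even deeper input; any attempt to circumvent it would force a direct analysis for each finite $X$ via its Landweber filtration of $BP_* X$, losing the clean reduction to the sphere.
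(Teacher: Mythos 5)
Your overall architecture (a filtration-shifting statement for the tower of acyclizations $C_nX$, combined with the $E_\infty$ vanishing line that the smash product theorem forces on the Adams--Novikov spectral sequence) is the right one, and the reduction to the sphere via smashingness and dualizability of $X$ is a legitimate preliminary step. Note, though, that the argument the paper builds on (and generalizes in \myref{finhomdimcc}, via \myref{pdimzeromap} and \myref{bpconvergent}) deliberately avoids this reduction and works directly with the finite projective $BP$-dimension of $BP_*(X)$; that is the form of the argument that survives when $X$ is merely connective and no longer dualizable.

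The genuine gap is in your third step. The assertion that ``the connectivity of $BP \wedge C_n$ tends to infinity'' is false: $\pi_*(BP \wedge C_nS^0) \cong \Sigma^{-n-1}BP_*/(p^{\infty},v_1^{\infty},\dots,v_n^{\infty})$, whose lowest nonzero homotopy group (generated by the image of $1/(pv_1\cdots v_n)$) sits in degree roughly $-2\sum_{i=1}^{n}(p^i-1)$, so these spectra become increasingly \emph{co}connective, and in a fixed degree the groups do not vanish as $n \to \infty$. A Mittag--Leffler argument premised on connectivity of the terms therefore cannot start. What is actually true, and what the proof needs, is a statement about the \emph{maps} rather than the objects: since $BP_*$ (more generally $BP_*(X)$ for $X$ finite) has finite projective dimension and is hence $v_m$-torsion-free for $m$ large, the maps $C_{m+1}X \to C_mX$ are zero on $BP$-homology for all $m$ at least the projective dimension (this is \myref{pdimzeromap}); consequently $C_{m+k}X \to C_mX$ has Adams--Novikov filtration at least $k$, and the uniform $E_\infty$ vanishing line supplied by the smash product theorem (\myref{bpconvergent}) then forces $\pi_iC_{m+s(i)}X \to \pi_iC_mX$ to vanish. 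It is this pro-triviality of the tower $\{\pi_iC_nX\}$, not any connectivity of its terms, that kills both $\lim$ and $\lim^1$ and yields $\lim_n C_nX \simeq 0$.
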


In light of the above, this motivates the question if all suspension spectra are chromatically complete, which would be a consequence of the following question raised by Ravenel in~\cite[Section 5]{ravconj}.  

\begin{question}\mylabel{ravenelquestion}
Does harmonic localization coincide with chromatic completion?
\end{question}

\subsection{Results and outline}

Our first goal in this paper is to give a general criterion for when a subcategory of $\Sp$ contains all suspension spectra. Using methods developed by Johnson and Wilson reviewed and extended in Section \ref{jwtheory}, we then prove our generalization of the chromatic convergence theorem, see \myref{finhomdimcc}.

\begin{thm*}
If $X$ is a connective spectrum with finite projective $BP$-dimension, then $X$ is chromatically complete.
\end{thm*}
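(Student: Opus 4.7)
My plan is to argue by induction on $d = \pdim BP_*(X) < \infty$. The chromatic completion functor $\bC = \lim_n L_n$ is a sequential limit of exact localizations and therefore preserves fiber sequences; in particular, the full subcategory of chromatically complete spectra is closed under cofibers. Using the Johnson--Wilson-style resolution recalled in Section~\ref{jwtheory}, I would realize a surjection $F \twoheadrightarrow BP_*(X)$ out of a free $BP_*$-module by a map $P \to X$ of spectra, where $P \simeq \bigvee_\alpha \Sigma^{n_\alpha} BP$ is a connective wedge of suspensions of $BP$ with $n_\alpha \ge 0$. A short diagram chase on $BP$-homology shows that the fiber $Y$ of $P \to X$ is again connective and satisfies $\pdim BP_*(Y) \le d - 1$, so the inductive step reduces the theorem to proving that every such $P$ is chromatically complete, with the base case $d=0$ being this very assertion for $X$ itself a retract of some $P$.

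I would then split the wedge case into two sub-claims: first, that $BP$ itself is chromatically complete, and second, that $\bC$ commutes with bounded-below wedges of suspensions of $BP$. For the first, I would invoke the computation of $L_n BP$ via Landweber exactness: the fiber $\fib(BP \to L_n BP)$ has connectivity tending to infinity with $n$, essentially because inverting $E(n)$ kills exactly the ideal $(v_{n+1}, v_{n+2}, \dots)$ whose lowest generator lives in a degree growing with $n$. This forces $BP \to \lim_n L_n BP$ to be an equivalence. For the second, observe that $\fib(W \to L_n W) \simeq \bigvee_\alpha \Sigma^{n_\alpha} \fib(BP \to L_n BP)$ for $W = \bigvee_\alpha \Sigma^{n_\alpha} BP$ with $n_\alpha \ge 0$, so its connectivity still grows uniformly in $n$. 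The Milnor $\lim^1$-sequence then collapses and $W \to \bC(W)$ is a $\pi_*$-isomorphism, hence an equivalence.

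The main obstacle is the quantitative connectivity estimate on $\fib(BP \to L_n BP)$ and, crucially, the uniformity needed to treat an arbitrary (possibly uncountable) bounded-below wedge of shifts of $BP$. This is exactly the kind of Johnson--Wilson input that Section~\ref{jwtheory} is set up to supply; a secondary, but more routine, obstacle is to verify that the $BP$-homology fiber $Y$ in the resolution step is genuinely connective and has the expected drop in projective dimension, which should follow from the long exact sequence in $BP$-homology applied to the cofiber sequence $Y \to P \to X$. Once both ingredients are in place, induction on $d$ and the wedge reduction combine to establish chromatic completeness of $X$.
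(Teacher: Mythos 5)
There are two genuine gaps, one in each half of your argument. First, the reduction step: a surjection $F \twoheadrightarrow BP_*(X)$ from a free $BP_*$-module cannot in general be realized by a map of spectra $P \to X$ with $P$ a wedge of suspensions of $BP$. A class in $BP_m(X) = \pi_m(BP \wedge X)$ only produces, via the module structure, a map $\Sigma^m BP \to BP \wedge X$, not a map to $X$ itself; producing spectrum-level resolutions that realize free resolutions of $BP_*(X)$ is the content of the Conner--Smith theory, which is a nontrivial result for \emph{finite} spectra and is exactly what does not extend naively to the infinite complexes this theorem concerns. So the induction on projective dimension does not get started, and even granting it, the base case would require chromatic completeness of arbitrary retracts of such wedges, not just of the wedges themselves.

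Second, and more seriously, the base case rests on a false connectivity claim. The fiber of $BP \to L_nBP$ is $C_nBP = \Sigma^{-n-1}N_{n+1}BP$ with $\pi_{*+n+1}C_nBP \cong BP_*/(p^{\infty}, v_1^{\infty},\ldots,v_n^{\infty})$, and this module contains nonzero classes $x/(pv_1\cdots v_n)$ for $x$ any monomial in $v_{n+1}, v_{n+2},\ldots$ (the colimit maps defining it are injective because $(p,v_1,\ldots,v_n)$ is a regular sequence). Chasing degrees, one finds for suitable fixed $i$ that $\pi_i C_nBP \neq 0$ for infinitely many $n$; in particular $\fib(BP \to L_nBP)$ is neither increasingly connective nor increasingly coconnective, and the Milnor sequence does not collapse for degree reasons. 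What is true --- and this is the mechanism of the paper's proof --- is that the tower $\{C_nX\}$ is \emph{pro}-trivial on homotopy even though no individual term vanishes in a given degree: by \myref{bptorsionfree} the module $BP_*(X)$ is $v_m$-torsion-free for $m$ above the projective dimension, which makes each map $C_{m+1}X \to C_mX$ $BP$-acyclic (\myref{pdimzeromap}) and hence of positive Adams--Novikov filtration, and the uniform vanishing line coming from the smash product theorem (\myref{bpconvergent}) then forces the long composites $\pi_iC_{m+s(i)}X \to \pi_iC_mX$ to be zero. Your proposal has no substitute for this filtration-shifting argument, so both the claim that $BP$ is chromatically complete and the uniform statement for bounded-below wedges would have to be reproved along the paper's lines rather than by connectivity estimates.
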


Furthermore, we study the relation between harmonic localization and chromatic completion in the context of idempotent approximations, showing in \myref{lhidempapprox} that $L_{\infty}$ is the closest idempotent monad equipped with a monad map to $\bC$. Finally, we construct an explicit spectrum $W$ such that $L_{\infty}W$ is not chromatically complete, thereby answering Ravenel's \myref{ravenelquestion}, see \myref{counterexample}. 

\begin{thm*}
Harmonic localization and chromatic completion do not coincide. 
\end{thm*}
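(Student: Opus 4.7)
The plan is to reduce the claim to exhibiting a single spectrum $W$ for which $L_{\infty}W$ is harmonic (automatically) but not chromatically complete; this immediately forces $\bC \not\simeq L_{\infty}$ as endofunctors. To set up the comparison I would first observe that each $L_nX$ is harmonic, since $E(n)$-locals are $\bigvee_m K(m)$-local, so the tower map $X \to \bC X$ factors naturally through $L_{\infty}X$. If $L_{\infty} \simeq \bC$, then $\bC$ is idempotent and $\bC L_{\infty}W \simeq L_{\infty}W$ for every $W$, so producing one counterexample suffices.

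For the construction, my proposal is to take $W = \bigvee_{n\ge 0}F_n$, where $F_n$ is a finite $p$-local spectrum of type exactly $n$ (e.g.\ a generalized Moore spectrum). By Theorem~\ref{chromaticconvergence} each $F_n$ is chromatically complete, and since $L_k$ is smashing with $L_kF_n = 0$ for $n > k$, we have $L_kW \simeq \bigvee_{n\le k} L_kF_n$. The key computation is that the inverse limit over $k$ upgrades these finite wedges to the full infinite product: $\bC W \simeq \prod_{n\ge 0} F_n$, because the transition maps act componentwise while the limit does not commute with infinite coproducts. Thus $\bC W$ is strictly larger than the wedge $W$.

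It remains to understand $L_{\infty}W$. A clean route is to replace $W$ by the variant $W' = \prod_n F_n$, which is automatically harmonic as a small limit of harmonic spectra, so $L_{\infty}W' \simeq W'$. One then needs to show $\bC W' \not\simeq W'$, which amounts to checking that $L_k$ fails to commute with the infinite product $\prod_n F_n$: at each chromatic stage one picks up nontrivial corrections (of $\lim^1$-type, or arising from phantom maps out of the product) that prevent the canonical map $W' \to \bC W'$ from being an equivalence. Alternatively, one works with $W$ directly and compares $L_{\infty}W$ (which one can estimate by checking the mapping property against dissonant spectra, using that each $F_n$ is harmonic) with $\bC W \simeq \prod_n F_n$, showing they disagree.

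The main obstacle is this last non-commutativity argument: one must verify rigorously that the inverse limit $\lim_k L_k\prod_n F_n$ differs from $\prod_n F_n$, and that the discrepancy is genuine rather than a bookkeeping artefact. This should draw on the Johnson--Wilson machinery reviewed in Section~\ref{jwtheory}, together with the controlled behaviour of the chromatic tower on finite spectra of ascending type and the idempotent-approximation result showing that $L_{\infty}$ is the \emph{closest} idempotent monad with a monad map to $\bC$. Once carried out, the explicit $W$ certifies $L_{\infty}W \not\simeq \bC L_{\infty}W$, answering Question~\ref{ravenelquestion} negatively and completing the proof.
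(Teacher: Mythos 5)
Your overall architecture is the same as the paper's: exhibit a wedge $W$ whose chromatic completion is the corresponding product, then detect that $\Lh W \not\simeq \bC W$. The paper's choice is $W=\bigvee_{i\ge 0}\Sigma^{i+1}C_iBP=\bigvee_i N_iBP$ rather than a wedge of finite type-$n$ complexes, but the first half of your argument is sound and, for finite $F_n$, actually cleaner than the paper's: $L_k$ is smashing so it commutes with the wedge, $L_kF_n=0$ for $n>k$, and chromatic convergence applied to each finite $F_n$ gives $\bC W\simeq \lim_k\prod_{n\le k}L_kF_n\simeq\prod_n F_n$ with no $\lim^1$ issues (the paper instead has to run a $\lim/\lim^1$ analysis because its wedge summands are infinite spectra).

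The genuine gap is exactly where you flag ``the main obstacle,'' and your primary route around it is not viable as stated. Passing to $W'=\prod_nF_n$ and trying to show $\bC W'\not\simeq W'$ replaces the problem by one at least as hard: $W'$ is a retract of $\bC W'$ via $W'\to\bC W'\to\lim_k\prod_nL_kF_n\simeq W'$, and deciding whether the complementary piece vanishes requires precisely the control over $L_k$ of an infinite product that you do not have (and even pointwise failure of $L_k$ to commute with the product would not by itself show the limit differs from $W'$). The paper stays with the wedge and closes the argument with \myref{cccrit}: for any $X$, $\Lh X$ is chromatically complete if and only if the cofiber $\bA X$ of $X\to\bC X$ is dissonant. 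Here $\pi_*\bA W\cong\prod_n\pi_*F_n/\bigoplus_n\pi_*F_n$, and the decisive point is that this quotient contains elements of infinite order, so $\bA W$ is not rationally acyclic and a fortiori not dissonant. For the paper's pieces, $\pi_*N_iBP\cong BP_*/(p^{\infty},\ldots,v_{i-1}^{\infty})$ has unbounded $p$-torsion in a fixed degree, so this is immediate. For your $F_n$ it is \emph{not} automatic: if the $p$-exponents of $\pi_dF_n$ are bounded for each fixed $d$, then $\prod/\bigoplus$ is all torsion, the rational detection fails, and you would be left computing $K(m)_*$ of an infinite product, which is intractable. So you must additionally arrange unbounded torsion exponents in a fixed degree (e.g.\ generalized Moore spectra with first exponent tending to infinity). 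With that extra hypothesis and with \myref{cccrit} supplying the comparison to $\Lh W$, your $W$ does yield a valid counterexample parallel to \myref{counterexample}; as written, the proposal stops short of the step that actually proves the theorem.
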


The motivating question of whether suspension spectra are chromatically complete remains open. 

\begin{rmk}
Work in progress of Salch provides different techniques for checking whether a given spectrum is chromatically complete.
\end{rmk}

\subsection{Notation and terminology}

Fix a prime $p$. Let $L_n$ be Bousfield localization at the Johnson--Wilson theory $E(n)$ with acyclification functor $C_n$, and denote by $L_n^f$ the corresponding finite localization, see for example~\cite{millerfiniteloc}. Harmonic localization $\Lh$ is defined as Bousfield localization at the wedge of all Morava K-theories $K(n)$ on the category of (always) $p$-local spectra $\Sp$, viewed as an idempotent monad in the natural way. A spectrum $X$ is called harmonic if the unit map $X \to \Lh X$ is an equivalence, and dissonant if $\Lh X$ is contractible. 

Moreover, let $\bC\colon \Sp \to \Sp$ be chromatic completion, i.e., the endofunctor given by $X \mapsto \lim_n L_nX$. The functor $\bC$ inherits a monad structure from the tower of monads $\cdots \to L_n \to L_{n-1} \to \cdots \to L_0$ and we say that $X$ is chromatically complete if the natural map $X \to \bC X$ is an equivalence. 

\subsection*{Acknowledgements}

This work benefited a lot from a correspondence with Andrew Salch, and I'm grateful to him for sharing his ideas. I would like to thank Omar Antol\'{\i}n-Camarena, Mike Hopkins, Tyler Lawson, Emily Riehl, Nathaniel Stapleton, and Steve Wilson for helpful conversations, and the Max Planck Institute for Mathematics for its hospitality.

\section{Projective dimension and Johnson--Wilson spaces}\label{jwtheory}

We start by reviewing those aspects of Johnson--Wilson theory that will be used later, and extend one of their key results to infinite complexes in \myref{bptorsionfree}. 

\subsection{Projective dimension}\label{finpdim}
Recall that the Brown-Peterson spectrum $BP$ is an $\Es{4}$-ring spectrum with coefficients $\pi_*BP = \Z_{(p)}[v_1,v_2,\ldots]$ with $\deg(v_n) =2(p^n-1)$. In this section, we introduce the definition and some basic facts about projective $BP$-dimension. 

\begin{defn}
If $M$ is a $BP_*$-module or a $(BP_*,BP_*BP)$-comodule, then the minimal length $m \in \N \cup \{\infty\}$ of a resolution of $M$ by projective $BP_*$-modules is called its projective (or homological) dimension, denoted by $\pdim(BP_*X)=m$. The projective $BP$-dimension of a spectrum $X$ is defined as the projective dimension of $BP_*(X)$.
\end{defn}

\begin{rmk}
Implicitly, we will always work with graded modules, and our notion of projective dimension will always be with respect to $BP_*$. By a result of Conner and Smith~\cite{connersmith1}, graded projective $BP_*$-modules are free.
\end{rmk}

\begin{ex}
Johnson and Wilson~\cite{johnsonwilsonpgroups} prove that the projective dimension of the suspension spectrum of $(B\Z/p)^n$ is precisely $n$. 
\end{ex}
 
If $X$ is a finite spectrum, then $BP_*(X)$ is in fact a coherent $BP_*$-module. 
This observation led Landweber to study the abelian category $\sB\sP$ of $BP_*BP$-comodules that are coherent as $BP_*$-modules. In~\cite[Cor. 7]{landwebernewapp}, he showed:

\begin{prop}[Landweber]
For $M$ a finitely generated $BP_*BP$-comodule, the following conditions are equivalent:
\begin{enumerate}
 \item $M \in \sB\sP$, i.e., $M$ is coherent
 \item $\pdim(M) < \infty$
 \item There exists an $n$ such that $M$ is $v_n$ torsion-free. 
\end{enumerate}
\end{prop}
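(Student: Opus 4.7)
The plan is to invoke Landweber's filtration theorem, which asserts that every finitely presented $BP_*BP$-comodule $N$ admits a finite filtration $0 = N_0 \subset N_1 \subset \cdots \subset N_k = N$ whose associated graded pieces $N_i/N_{i-1}$ are (shifted copies of) $BP_*/I_{n_i}$, for invariant prime ideals $I_{n_i} = (p, v_1, \ldots, v_{n_i-1})$. All three conditions can then be read off from the invariants appearing in this filtration.

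First I would establish (1) $\Rightarrow$ (2) and (1) $\Rightarrow$ (3) simultaneously. Given coherence of $M$, apply the filtration theorem and set $n = \max_i n_i$. On the one hand, each quotient $BP_*/I_{n_i} \cong \F_p[v_{n_i}, v_{n_i+1}, \ldots]$ is a polynomial ring on which multiplication by $v_n$ is injective (since $n \geq n_i$); an induction up the filtration using the associated short exact sequences then shows that $v_n$ is injective on $M$, giving (3). On the other hand, the Koszul complex on the regular sequence $(p, v_1, \ldots, v_{n_i-1})$ furnishes a finite free resolution of $BP_*/I_{n_i}$ of length exactly $n_i \leq n$, so the long exact $\Ext$-sequences along the filtration yield $\pdim(M) \leq n < \infty$, giving (2).

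The direction (2) $\Rightarrow$ (1) is essentially formal: a finitely generated $BP_*$-module of finite projective dimension admits a finite-length resolution by finitely generated free modules (invoking Conner--Smith to promote projective to free), from which coherence follows by induction on syzygies.

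The main obstacle is (3) $\Rightarrow$ (1), since $BP_*$ is not Noetherian and coherence is therefore not a formal consequence of finite generation. My strategy would be to use the $v_n$-torsion-freeness of $M$ to form the short exact sequence $0 \to M \to v_n^{-1}M \to v_n^{-1}M/M \to 0$, where $v_n^{-1}M$ is a finitely generated module over the Noetherian localization $v_n^{-1}BP_*$. Combining this with a descending induction on the chromatic height---at each stage checking that the $v_m$-torsion subcomodule of a finitely generated comodule is again finitely generated so that the induction can terminate---should propagate finite presentation back to $M$ and then to all of its syzygies. This comodule-theoretic step, which essentially uses the $BP_*BP$-comodule structure rather than just the underlying module structure, is the technical heart of the argument and the place where Landweber's original proof does the real work.
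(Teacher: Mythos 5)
The paper does not prove this proposition: it is quoted from Landweber and cited as \cite[Cor.~7]{landwebernewapp}, so there is no in-paper argument to measure yours against; I can only judge the proposal on its merits. The parts that work are the two implications out of (1): given coherence, the Landweber filtration theorem applies, the Koszul complex on the regular sequence $(p,v_1,\dots,v_{n_i-1})$ gives $\pdim(BP_*/I_{n_i})=n_i$, and induction up the filtration (snake lemma for injectivity of $v_n$ with $n=\max_i n_i$, long exact $\Ext$ sequences for the bound on projective dimension) is exactly the standard argument. Those directions are fine.

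The other two directions have genuine gaps. For (2)$\Rightarrow$(1) you assert it is ``essentially formal'' that a finitely generated module of finite projective dimension admits a finite resolution by finitely generated frees; it is not. With $\pdim$ defined as the minimal length of a resolution by arbitrary projectives, nothing forces the kernel of a surjection $F\twoheadrightarrow M$ from a finitely generated free to be finitely generated when $M$ is merely finitely generated over the non-Noetherian ring $BP_*$, and Conner--Smith only converts graded projectives to frees --- it says nothing about finite generation of syzygies. This direction also genuinely needs the comodule structure (via Landweber's theorem that the associated primes of a finitely generated comodule are invariant primes $I_m$). For (3)$\Rightarrow$(1), which you correctly identify as the technical heart, your strategy rests on the claim that $v_n^{-1}BP_*$ is Noetherian; it is not --- it is $\Z_{(p)}[v_1,v_2,\dots][v_n^{-1}]$, still a polynomial ring on infinitely many generators, and the chain $(v_{n+1})\subset(v_{n+1},v_{n+2})\subset\cdots$ does not stabilize. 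The Noetherian ring in the vicinity is the quotient $E(n)_*=\Z_{(p)}[v_1,\dots,v_{n-1},v_n^{\pm1}]$, and relating $v_n^{-1}BP_*BP$-comodules to $E(n)_*E(n)$-comodules is itself a substantive theorem, not a localization formality. Since the rest of that paragraph is an unexecuted plan (``should propagate finite presentation back to $M$''), the hard implication remains unproved; Landweber's actual argument runs through the invariant prime ideal theorem and a filtration/primary-decomposition induction internal to the category of comodules, none of which is reconstructed here.
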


However, we will be mainly interested in infinite spectra, so we need the more general techniques developed by Johnson and Wilson following Conner and Smith. We end this section by mentioning natural examples of spaces with infinite projective dimension. If $p=2$, there is a ($2$-local) fiber sequence
\[K(\Z,3) \to BU\langle 6 \rangle \to BSU.\]
Since the second and third term have torsion-free $\Z_{(p)}$-homology, their projective dimension is 0. In contrast, there is the following surprising result of~\cite{johnsonwilsonems}.

\begin{thm} [Johnson--Wilson]
The projective dimension of Eilenberg-Mac Lane spaces is infinite in the following cases:
\begin{enumerate}
 \item $\pdim(BP_*K(\Z,m)) = \infty$ if and only if $m \ge 3$.
 \item $\pdim(BP_*K(\Z/p^k,m)) = \infty$ if and only if $m \ge 2$ and $k \ge 1$.
\end{enumerate}
\end{thm}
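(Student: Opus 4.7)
The plan is to prove both directions of both statements by combining explicit computations of $BP$-homology for small $m$ with an infinite-complex generalization of Landweber's criterion recalled above: finite projective $BP$-dimension forces the generators $p, v_1, v_2, \ldots$ to act essentially as a regular sequence eventually, so producing $v_n$-torsion in $BP_* K(\pi, m)$ for arbitrarily large $n$ suffices to rule out finite projective dimension, while in the opposite direction one needs explicit finite free resolutions.

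For the finite-dimensional direction, I would observe that $K(\Z,0)$, $K(\Z,1) \simeq S^1$, and $K(\Z,2) \simeq \CP$ all have torsion-free integral homology concentrated in even degrees, so the Atiyah--Hirzebruch spectral sequence $H_*(-;BP_*) \Rightarrow BP_*(-)$ collapses and exhibits $BP_* K(\Z,m)$ as a free $BP_*$-module for $m \le 2$. For the boundary case $K(\Z/p^k,1) = B\Z/p^k$, complex orientability of $BP$ yields
\[
BP^* B\Z/p^k \;\cong\; BP^* \llb x \rrb \big/ [p^k]_F(x),
\]
and since $[p]_F(x) = p x + \cdots + v_1 x^p + \cdots$ begins with a regular pair on $BP^* \llb x \rrb$, a Koszul-style computation (dualized to the Johnson--Wilson example cited above) yields $\pdim = 1$, which is finite.

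For the infinite-dimensional direction, the main input is the Ravenel--Wilson computation of $K(n)_* K(\Z/p^k,m)$ and $K(n)_* K(\Z,m)$ as Hopf algebras over $K(n)_*$. For $m \ge 2$ in the $\Z/p^k$ case and $m \ge 3$ in the $\Z$ case, this computation exhibits, for every sufficiently large $n$, indecomposable classes whose presentation reflects the Morava $K$-theory identity $[p]_F(x) \equiv v_n x^{p^n}$. Lifting these along the quotient $BP \to K(n)$ produces classes in $BP_* K(\pi, m)$ annihilated by $v_n$ modulo $(p, v_1, \ldots, v_{n-1})$ for arbitrarily large $n$, and the contrapositive of the Landweber/Johnson--Wilson criterion then rules out finite projective dimension. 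The $k \ge 2$ cases reduce to $k = 1$ by induction using Bockstein-type fibrations arising from the short exact sequences $0 \to \Z/p \to \Z/p^k \to \Z/p^{k-1} \to 0$.

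The principal obstacle will be the lifting step in the infinite direction: $v_n$-torsion witnesses live most naturally in $K(n)_*$-homology, and transporting them back to $BP_*$-homology requires more than a formal diagram chase, since torsion could in principle be an artifact of the projection $BP_* \to K(n)_*$. Ravenel--Wilson's explicit Hopf-ring generators, together with the even-degree concentration of $BP_*$ of these Eilenberg--Mac Lane spaces (Wilson's even-degree theorem), are what render the lift tractable; an approach that bypasses these structural inputs is likely to founder at exactly this step.
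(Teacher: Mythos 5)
The paper does not prove this statement: it is quoted from Johnson--Wilson and used as a black box, so there is no internal proof to measure your argument against. Judged on its own, the finite-dimensional half of your proposal is essentially fine: $K(\Z,m)$ for $m\le 2$ has free $\Z_{(p)}$-homology, so the Atiyah--Hirzebruch spectral sequence collapses and $\pdim(BP_*K(\Z,m))=0$, while the Gysin sequence of $B\Z/p^k\to BS^1\xrightarrow{p^k}BS^1$ presents $\widetilde{BP}_*(B\Z/p^k)$ as the cokernel of an injection of free $BP_*$-modules, giving $\pdim=1$. The architecture of the infinite-dimensional half --- exhibit $v_n$-torsion for arbitrarily large $n$ and invoke the torsion criterion, i.e.\ condition (5) of the Johnson--Wilson theorem of Section \ref{truncatedbp} or \myref{bptorsionfree} --- is also the correct one.

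The gap is the lifting step, exactly where you flag it, and as stated it is fatal rather than merely technical. The criterion requires \emph{honest} $v_n$-torsion over $BP_*$ (or over $BP\langle n\rangle_*$); what your construction produces is classes annihilated by $v_n$ modulo $(p,v_1,\dots,v_{n-1})$, read off from the Morava $K$-theory relation $[p]_F(x)\equiv v_nx^{p^n}$. These are not the same, and the discrepancy is already visible for $B\Z/p$: that relation is ``seen'' by $K(n)_*(B\Z/p)$ for every $n$ (it is precisely what truncates $K(n)_*(B\Z/p)$ to a finite-rank module), yet $\pdim(BP_*B\Z/p)=1$, so by \myref{bptorsionfree} the module $BP_*(B\Z/p)$ is $v_m$-torsion-free for all $m\ge 2$. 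Hence the witnesses you propose to lift cannot distinguish $B\Z/p$ (finite projective dimension) from $K(\Z/p,2)$ (infinite projective dimension), and no formal diagram chase from the Ravenel--Wilson computation alone can close the gap: the content of the theorem is exactly that for $m\ge 2$ the mod-$(p,\dots,v_{n-1})$ torsion is \emph{not} an artifact of the reduction, and establishing that requires integral input on $BP\langle n\rangle_*K(\pi,m)$ itself --- for instance via the tower of $\BP{j}$'s and the connecting maps $\Delta_{j}$ of Section \ref{truncatedbp} --- rather than only its $K(n)$-shadow. Until that step is supplied, the infinite-dimensional direction (which is the substance of the theorem) remains unproven.
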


In particular, $K(\Z,3)$ has infinite projective dimension, and the methods of Section \ref{suspensionspectra} do not apply.

\subsection{Truncated Brown--Peterson spectra and torsion}\label{truncatedbp}

Using~\cite{ekmm} or the manifolds with singularities approach of Baas and Sullivan, one constructs the truncated Brown--Peterson spectra $\BP{n}$ as a $BP$-module with coefficient ring $\Z_{(p)}[v_1,\ldots,v_n]$ for every $n \in \N \cup \{\infty\}$. By definition, we set $\BP{-1} = H\F_p$. 

\begin{ex}\mylabel{truncatedbpex}
For example, $BP\langle 0 \rangle = H\Z_{(p)}$ and $BP\langle 1 \rangle$ is a summand of connective $K$-theory localized at $p$.
\end{ex}

As shown by Johnson and Wilson~\cite{johnsonwilson}, the theories $\BP{n}$ are convenient to track torsion in the $BP$-homology of spaces. To this end, they observe that for any spectrum $X$, there exists a natural tower of $BP_*$-modules
\[\resizebox{\columnwidth}{!}{\xymatrix{BP_*(X) \ar[r] \ar@/^2pc/[rrr]^{\rho(n,\infty)} & \dots \ar[r] & BP\langle n+1 \rangle_*(X) \ar[r] & BP\langle n \rangle_*(X) \ar[r] \ar@{-->}[ld]^{\Delta_{n+1}} & \cdots \ar[r] & BP\langle -1 \rangle_*(X) \\
& & BP\langle n+1 \rangle_*(X) \ar[u]^{v_{n+1}} }}\]
where the indicated triangles are exact sequences. The main result of~\cite{johnsonwilson} is:

\begin{thm}[Johnson--Wilson]
For $X$ a finite complex, the following are equivalent:
\begin{enumerate}
 \item $\pdim BP_*(X) \le n+1$
 \item $\rho(n,\infty)\colon BP_*X \to BP\langle n \rangle_*X$ is surjective
 \item $BP_*(X) \otimes_{BP_*}BP\langle n \rangle_* \xrightarrow{\sim} BP\langle n \rangle_*(X)$
 \item $\Tor_1^{BP_*}(BP_*(X), BP\langle n \rangle_*)=0$.
\end{enumerate}
If $X$ is a connective CW spectrum with $H_*(X, \Z_{(p)})$ of finite type, (1) above is equivalent to:
\begin{enumerate}
 \item[(5)] $BP\langle n+1 \rangle_*(X)$ is $v_{n+1}$ torsion-free.
\end{enumerate}
\end{thm}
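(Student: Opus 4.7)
The plan is to use the cofiber sequences
\[\Sigma^{|v_k|}\BP{k} \xrightarrow{v_k} \BP{k} \to \BP{k-1}\]
for $k > n$ as the main homological engine. Since $v_k$ acts regularly on $\BP{k}_*$, splicing these short exact sequences yields a Koszul-type free $BP_*$-resolution of $\BP{n}_*$, well-defined in each fixed total degree because only finitely many $v_k$ contribute there. Realizing this resolution by a tower of $BP$-module spectra and smashing with $X$ produces a universal coefficient spectral sequence
\[E^2_{s,t} = \Tor^{BP_*}_{s,t}(BP_*(X), \BP{n}_*) \Longrightarrow \BP{n}_{t-s}(X)\]
whose edge map is precisely the natural comparison in (3). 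Under this identification, $\Tor^{BP_*}_*(BP_*X, \BP{n}_*)$ becomes the Koszul homology of the sequence $v_{n+1}, v_{n+2}, \ldots$ acting on $BP_*(X)$.

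With that machinery in place, I would go around the ring of equivalences. For $(3) \Leftrightarrow (4)$: vanishing of $\Tor_1$ propagates through the $BP_*BP$-comodule structure on $BP_*X$ via Landweber's classification of invariant primes, killing all higher Koszul homology and forcing the spectral sequence to collapse so that the edge map is an isomorphism. For $(2) \Leftrightarrow (3)$: the tower map $\rho(n,\infty)$ factors as $BP_*X \surj BP_*X \otimes_{BP_*} \BP{n}_* \to \BP{n}_*X$ where the second arrow is the edge map, and surjectivity of the full composite translates directly into surjectivity of the edge map, i.e.\ into the identification in (3). For $(1) \Leftrightarrow (4)$: by the Conner--Smith result cited earlier, graded projective $BP_*$-modules are free, so $\pdim BP_*X$ is computed by a minimal free resolution, and the comodule condition on $BP_*X$ forces this length to equal the first $k$ such that $v_{k+1}, v_{k+2}, \ldots$ is regular on $BP_*X$ --- exactly what the Koszul computation of $\Tor_1(BP_*X, \BP{n}_*)$ detects.

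For the connective CW finite-type case, the equivalence with (5) starts from the long exact sequence in $X$-homology of the cofiber for $v_{n+1}$: $\BP{n+1}_*X$ is $v_{n+1}$-torsion free iff $\BP{n+1}_*X \to \BP{n}_*X$ is surjective. Connectivity plus finite type then lets this single step propagate up the entire tower, because in any fixed degree only finitely many tower stages contribute non-trivially (the degrees $|v_k| = 2(p^k-1)$ grow without bound); so an induction along the tower using Landweber-regularity at each step yields the full surjectivity of $\rho(n,\infty)$ in (2).

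The main obstacle is $(4) \Rightarrow (1)$, and more generally any step that upgrades a $\Tor$-vanishing statement to structural information about a resolution of $BP_*X$. A purely module-theoretic version of the equivalence fails --- for instance $BP_*/(v_2)$ has projective dimension $1$ but nontrivial $\Tor_1$ with $\BP{0}_* = \Z_{(p)}$ --- so at each such step one must use in an essential way that $BP_*X$ is a $BP_*BP$-comodule, invoking Landweber's classification of invariant primes to rule out non-realizable modules.
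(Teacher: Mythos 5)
A point of reference before anything else: the paper does not prove this statement---it is quoted as the main theorem of \cite{johnsonwilson} and used as a black box---so there is no in-paper argument to compare yours against, and your attempt has to be judged on its own terms. Your outline correctly identifies the standard machinery (the Koszul resolution of $\BP{n}_*$ over $BP_*$ coming from the regular sequence $v_{n+1},v_{n+2},\ldots$, the resulting universal coefficient spectral sequence $\Tor^{BP_*}_{s,t}(BP_*X,\BP{n}_*)\Rightarrow \BP{n}_{s+t}(X)$, and the fact that the comodule structure must enter somewhere), but two of the steps have genuine gaps. In $(2)\Leftrightarrow(3)$ you pass from surjectivity of $\rho(n,\infty)$ to surjectivity of the edge homomorphism (fine, since $BP_*X\to BP_*X\otimes_{BP_*}\BP{n}_*$ is onto) and then declare this to be ``the identification in (3).'' But surjectivity of the edge map only says that the positive-filtration quotients vanish at $E_\infty$; it gives neither injectivity of the edge map (which requires that no differential $d_r\colon E^r_{r,*}\to E^r_{0,*}$ hits the zero line) nor vanishing of $\Tor_1$ on the $E_2$-page (the one-line is the target of $d_r\colon E^r_{1+r,*}\to E^r_{1,*}$ and can die at $E_\infty$ without being zero). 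Closing this loop is exactly where coherence of $BP_*X$ for $X$ finite must be used, and you have not supplied that argument.

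Second, the two load-bearing implications---that $\Tor_1^{BP_*}(BP_*X,\BP{n}_*)=0$ kills all higher $\Tor$, and that it is equivalent to $\pdim(BP_*X)\le n+1$---are essentially the whole content of the theorem, and you defer both to ``Landweber's classification of invariant primes.'' That classification by itself does not do the job: what is actually needed is the Landweber filtration theorem (every coherent $BP_*BP$-comodule admits a finite filtration with subquotients of the form $BP_*/I_j$), an explicit computation of $\Tor_*^{BP_*}(BP_*/I_j,\BP{n}_*)$, an induction over the filtration, and the minimal-resolution criterion $\pdim M=\sup\{s:\Tor_s^{BP_*}(M,\F_p)\neq0\}$. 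You flag this yourself as ``the main obstacle,'' which is honest, but it leaves the argument an outline rather than a proof. Finally, in the finite-type part your propagation ``up the entire tower'' is unjustified as stated: $v_{n+1}$-torsion-freeness of $\BP{n+1}_*(X)$ controls only the single stage $\BP{n+1}_*(X)\to\BP{n}_*(X)$, whereas $\rho(n,\infty)$ factors through infinitely many stages, and relating $BP_*(X)$ to the top of that tower is precisely where connectivity and the finite type hypothesis are needed.
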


Let $X$ be a connective spectrum with $\pdim(BP_*X) =n$. Under the extra assumption that $H_*(X,\Z_{(p)})$ is of finite type, it follows from the above that $BP\langle m \rangle_*(X)$ is $v_m$ torsion-free for all $m\ge n$. However, the only place in the argument given there that claims to use the finite type hypothesis is~\cite[Prop. 3.10]{johnsonwilson}, stating that the $\Z_{(p)}$-homology of a connected spectrum is free if and only if so is $BP_*(X)$. Since the Atiyah--Hirzebruch spectral sequence exists and converges for such $X$ (see, for example,~\cite[Cor. 4.2.6]{kochmanbook}), the proof works equally well without the finite type assumption and we conclude:

\begin{cor}\mylabel{bptorsionfree}
If $X$ be a connective spectrum with $\pdim(BP_*X) =n$, then $BP_*(X)$ is $v_m$ torsion-free for $m > n$.
\end{cor}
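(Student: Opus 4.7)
The proof is essentially sketched in the paragraph preceding the corollary: we want to run the Johnson--Wilson argument verbatim and pinpoint the single place where the finite type hypothesis intervenes. The plan is first to review the implications chain (1) $\Rightarrow$ (5) of the preceding theorem with an eye toward where countable-per-degree assumptions on $H_*(X;\Z_{(p)})$ are actually used, and then replace that step by an Atiyah--Hirzebruch argument that works for arbitrary connective CW spectra.

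Concretely, I would start by reducing to the case $m = n+1$: once we know $BP\langle n+1\rangle_*(X)$ is $v_{n+1}$ torsion-free under $\pdim BP_*X = n$, the analogous conclusion for larger $m$ follows from the equivalence of (1) with (5) in the Johnson--Wilson theorem applied to the weaker bound $\pdim BP_*X \le m$. So everything reduces to verifying (1) $\Rightarrow$ (5) for connective CW spectra without the finite type hypothesis.

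Next, I would trace through Johnson--Wilson's argument for (1) $\Rightarrow$ (5) step by step. All of the manipulations with the tower relating $BP\langle n+1\rangle$ and $BP\langle n\rangle$ via the cofiber sequences defining these truncated theories are formal and make no use of finite type. The only input that is not purely homological algebra is their \cite[Prop.~3.10]{johnsonwilson}, which identifies freeness of $BP_*(X)$ with freeness of $H_*(X;\Z_{(p)})$ for connected spectra, and whose proof runs the Atiyah--Hirzebruch spectral sequence
\[E_2^{*,*} = H_*(X;BP_*) \Longrightarrow BP_*(X).\]
In \cite{johnsonwilson} this spectral sequence is set up assuming finite type so that convergence is automatic.

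The key observation, following the hint in the paragraph before the corollary, is that for a connective CW spectrum the AHSS is still strongly convergent, a fact recorded in \cite[Cor.~4.2.6]{kochmanbook}. One then reruns the argument of \cite[Prop.~3.10]{johnsonwilson} using only this weaker convergence statement: if $H_*(X;\Z_{(p)})$ is free (equivalently, the spectral sequence collapses) then $BP_*(X)$ is free as a graded $BP_*$-module, and conversely. The main obstacle is simply verifying that no subtle use of finite type is hidden in the other steps of the proof — extensions, lifts of bases through the tower, and the Conner--Smith freeness of projectives — all of which should survive verbatim since they are statements about individual modules and do not require per-degree finite generation. Given this, the chain (1) $\Rightarrow$ (5) goes through unchanged and the corollary follows.
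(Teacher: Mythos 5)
Your overall strategy---isolating the use of the finite type hypothesis in the Johnson--Wilson argument to \cite[Prop.~3.10]{johnsonwilson} and replacing that step by the convergence of the Atiyah--Hirzebruch spectral sequence for arbitrary connective spectra, as recorded in \cite[Cor.~4.2.6]{kochmanbook}---is exactly the route the paper takes. But there is a genuine gap at the end: what your argument actually delivers is condition (5), namely that $BP\langle m \rangle_*(X)$ is $v_m$ torsion-free for $m > n$, whereas the corollary asserts that $BP_*(X)$ itself is $v_m$ torsion-free. These are different modules, and the passage from the former to the latter is not formal: under the projective dimension hypothesis the comparison map $\rho(m,\infty)\colon BP_*(X) \to BP\langle m\rangle_*(X)$ is surjective (condition (2)) but not injective, so a $v_m$-torsion class in $BP_*(X)$ could a priori die in $BP\langle m\rangle_*(X)$. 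Even using condition (3), $BP_*(X) \otimes_{BP_*} BP\langle m\rangle_* \cong BP\langle m\rangle_*(X)$, one only learns that such a class lies in $(v_{m+1}, v_{m+2}, \ldots)\cdot BP_*(X)$, which does not force it to vanish.

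The paper closes exactly this gap by additionally invoking \cite[Cor.~3.5]{johnsonwilson}, which converts the projective dimension bound into the absence of $v_m$-torsion in $BP_*(X)$ itself for $m > n$; the preceding discussion in the paper serves to justify that the proof of that corollary survives without the finite type hypothesis, since the only place finite type enters is through Prop.~3.10 and the spectral sequence. To repair your write-up you would either need to quote and de-finite-type that corollary as well, or give a direct argument passing from the $v_m$ torsion-freeness of the truncated theories back to $BP_*(X)$, e.g., via the exact triangles $BP\langle m\rangle_*(X) \xrightarrow{v_m} BP\langle m\rangle_*(X) \to BP\langle m-1\rangle_*(X)$ in the Johnson--Wilson tower together with the surjectivity of $\rho(m,\infty)$. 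As written, the proposal proves a statement adjacent to, but not identical with, the one claimed.
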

\begin{proof}
This follows from the previous discussion and~\cite[Cor. 3.5]{johnsonwilson}.
\end{proof}

\subsection{Johnson--Wilson spaces}\label{jws}

The spaces appearing in the $\Omega$-spectrum of truncated Brown--Peterson spectra will play an important role in our study of suspension spectra. They were investigated thoroughly in Wilson's thesis~\cite{wilsonthesis1,wilsonthesis2}, the main results of which we summarize here as far as they are needed below.

\begin{defn}
The Johnson--Wilson space $\BP{n}_k$ is defined as the $k$-th space in the $\Omega$-spectrum of $\BP{n}$, i.e.,
\[\BP{n}_k = \Omega^{\infty-k}\BP{n}.\]
\end{defn}

\begin{ex}
It follows from \myref{truncatedbpex} that $\BP{0}_k$ is just the Eilenberg--Mac Lane space $K(\Z_{(p)},k)$. If $p=2$, one can show that $\BP{1}_4 = BSU$ and $\BP{1}_6 \simeq BU\la 6\ra$, see~\cite{priddycell}.
\end{ex}

In order to state Wilson's splitting theorem~\cite{wilsonthesis2}, we introduce the auxiliary function
\[f(n) = 2\sum_{i=0}^np^i = 2 \frac{p^{n+1}-1}{p-1}.\]

\begin{thm}[Wilson]\mylabel{wilsonsplitting}
If $k \le f(n)$, $BP_k \simeq BP\langle n \rangle_k \times \prod_{j >n} BP\langle j \rangle_{k +2(p^j-1)}$, and for $k > f(n-1)$, this decomposition is as irreducibles. Furthermore, if $k <f(n)$, this is as $H$-spaces.
\end{thm}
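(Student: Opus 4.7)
The plan is to proceed by induction on $n$ using the $v_{n+1}$-cofiber sequences
\[\Sigma^{2(p^{n+1}-1)} \BP{n+1} \xrightarrow{v_{n+1}} \BP{n+1} \longrightarrow \BP{n},\]
which, upon applying $\Omega^{\infty-k}$, yield principal fibrations
\[\BP{n+1}_{k-2(p^{n+1}-1)} \longrightarrow \BP{n+1}_k \longrightarrow \BP{n}_k\]
of infinite loop spaces. The splitting of $BP_k$ then reduces to producing compatible infinite loop sections of these projections for all $j \ge n$ in the range $k \le f(n)$; iterating and passing to the homotopy limit over the tower $\{\BP{N}\}$ then yields the full statement.

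The heart of the argument is the construction of the section at each stage. I would first compute $H^*(\BP{n}_k;\F_p)$ by upward induction on $k$, starting from $\BP{n}_0 = \Omega^{\infty}\BP{n}$ -- whose cohomology is accessible from the explicit description of $H^*(\BP{n};\F_p)$ as a quotient of the mod $p$ Steenrod algebra -- and propagating via the Serre spectral sequence for the path-loop fibration $\BP{n}_k \to \ast \to \BP{n}_{k+1}$. The target is to show that in the stated range, $H^*(\BP{n}_k;\F_p)$ is a polynomial algebra on a controlled set of generators whose lowest degree is governed by $f(n)$. This cohomological description then feeds into an obstruction-theoretic argument: the obstructions to lifting $\mathrm{id}\colon \BP{n}_k \to \BP{n}_k$ through $\BP{n+1}_k \to \BP{n}_k$ live in cohomology groups with coefficients in $\pi_*$ of the fiber, and the explicit polynomial description forces them to vanish precisely when $k \le f(n)$. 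In the strict range $k < f(n)$ the same primitivity analysis shows the section respects the loop-space $H$-structure.

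Once the single-stage splitting $\BP{n+1}_k \simeq \BP{n}_k \times \BP{n+1}_{k - 2(p^{n+1}-1)}$ is available, I would iterate it over $j = n, n+1, \ldots, N$ and pass to the limit $N \to \infty$; the resulting $\lim^1$ obstructions vanish because the fibers of $BP \to \BP{N}$ are increasingly connective. The irreducibility for $k > f(n-1)$ then follows from the observation that in that range the cohomology of each factor is generated by a single indecomposable class, ruling out any finer $H$-space decomposition. The main obstacle is the cohomology computation underlying the obstruction vanishing: tracking the polynomial structure of $H^*(\BP{n}_k;\F_p)$ through iterated Serre spectral sequences is combinatorially intricate, and it is precisely this computation that pins down the sharp bound $f(n) = 2\sum_{i=0}^{n} p^i$ in the statement.
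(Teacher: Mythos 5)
The paper offers no proof of this statement---it is quoted verbatim from Wilson's thesis \cite{wilsonthesis2}---so your sketch can only be measured against Wilson's original argument. The overall architecture you propose (peel off one $\BP{j}$ at a time along the tower of truncated Brown--Peterson spectra, using the $v_j$-cofiber sequences together with a homology computation of the $\Omega$-spectrum spaces to control obstructions, then pass to the limit) is recognizably the right shape. But there are concrete errors. First, the fiber of $\BP{n+1}_k \to \BP{n}_k$ is $\BP{n+1}_{k+2(p^{n+1}-1)}$, not $\BP{n+1}_{k-2(p^{n+1}-1)}$: applying $\Omega^{\infty-k}$ to $\Sigma^{d}\BP{n+1}\xrightarrow{v_{n+1}}\BP{n+1}\to\BP{n}$ with $d=2(p^{n+1}-1)$ gives $(\Sigma^{d}\BP{n+1})_k\simeq\BP{n+1}_{k+d}$ (compare the fiber sequences displayed in the proof of \myref{emsfinitedepth}). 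With your index the iterated splitting would produce factors $\BP{j}_{k-2(p^j-1)}$, which contradicts the statement you are proving and is already false on homotopy groups; with the correct index, the high connectivity of the fiber is precisely what makes the obstruction analysis tractable.

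Second, and more fundamentally, the sections cannot be \emph{infinite loop} sections. An infinite loop section of $\BP{n+1}_k\to\BP{n}_k$ would come from a section of $\BP{n+1}\to\BP{n}$ at the level of (connective covers of) spectra; no such stable splitting exists, as one sees for instance on mod $p$ cohomology, where the induced map of modules over the Steenrod algebra kills $Q_{n+1}\cdot 1$ and hence cannot be a split injection. Equivalently, an infinite-loop splitting at one level $k$ would deloop to every level, whereas the bound $k\le f(n)$ is sharp: for example $\BP{0}_3=K(\Z_{(p)},3)$ is not a retract of $BP_3$, since $H_*(BP_3;\Z_{(p)})$ is torsion free while $H_*(K(\Z,3);\Z_{(p)})$ is not. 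The theorem is a genuinely unstable statement---the splitting is of spaces, and of $H$-spaces only for $k<f(n)$---and the function $f(n)$ exists exactly to delimit the range in which the unstable section can be built; your framework erases this distinction and would ``prove'' a false stable result. Finally, the irreducibility for $f(n-1)<k\le f(n)$ is not because ``the cohomology of each factor is generated by a single indecomposable class'': by \myref{homologyjws} these cohomologies are infinitely generated polynomial or exterior algebras, and indecomposability as a product of $H$-spaces is a separate and substantially harder part of Wilson's argument.
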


This splitting has a number of interesting consequences. In~\cite{wilsonthesis1}, Wilson computes the $\Z_{(p)}$-(co)homology of $BP_k$ for $k \le f(n)$, which can be used to deduce the (co)homology of the Johnson--Wilson spaces $\BP{n}_k$ in the range $k \le f(n)$.

\begin{thm}[Wilson]\mylabel{homologyjws}
If $k \le f(n)$, then the $\Z_{(p)}$-(co)homology of the connected part of $BP\langle n \rangle_k$ has no torsion and is a polynomial algebra for $k<f(n)$ even and an exterior algebra for $k$ odd. 
\end{thm}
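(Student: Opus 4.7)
The plan is to derive this statement from Wilson's splitting theorem \myref{wilsonsplitting} together with his earlier computation of $H_*(BP_k;\Z_{(p)})$ in \cite{wilsonthesis1}. The point is that once we know what happens for $BP_k$ itself in the relevant range, the splitting lets us read off the structure on each Johnson--Wilson factor.

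First I would recall Wilson's computation from \cite{wilsonthesis1} in the necessary range: for $k\le f(n)$, the connected component of $BP_k$ has torsion-free $\Z_{(p)}$-(co)homology, and for $k<f(n)$ this homology is a polynomial algebra when $k$ is even and an exterior algebra when $k$ is odd. (For odd $k$ one can also handle the endpoint $k=f(n)$ since the exterior/polynomial dichotomy is governed by the parity of $k$, not by the position relative to $f(n)$.) This is the input on which everything rests; it is proved in \cite{wilsonthesis1} by an inductive Atiyah--Hirzebruch / bar spectral sequence argument starting from the Eilenberg--Mac Lane spaces $BP\langle 0\rangle_k = K(\Z_{(p)},k)$.

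Next I would apply \myref{wilsonsplitting} in the range $k<f(n)$, where the decomposition
\[
BP_k \simeq BP\langle n\rangle_k \times \prod_{j>n} BP\langle j\rangle_{k+2(p^j-1)}
\]
holds as $H$-spaces. Passing to $\Z_{(p)}$-homology and using Künneth (all factors are torsion-free by the first step, applied for each $j$ separately), we obtain a tensor product decomposition of graded $\Z_{(p)}$-algebras
\[
H_*(BP_k;\Z_{(p)}) \;\cong\; H_*(BP\langle n\rangle_k;\Z_{(p)}) \otimes \bigotimes_{j>n} H_*(BP\langle j\rangle_{k+2(p^j-1)};\Z_{(p)}).
\]
Torsion-freeness of each factor is then automatic since the left-hand side is torsion-free. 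The algebra structure on the $BP\langle n\rangle_k$-factor is extracted by identifying indecomposables in a graded connected Hopf algebra: the indecomposables of a polynomial (resp.\ exterior) algebra split compatibly with any Hopf-algebra tensor decomposition, so each tensor factor is again polynomial (resp.\ exterior) of the same parity type.

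Finally, for the boundary case $k=f(n)$ (when $k$ is even), \myref{wilsonsplitting} provides only a splitting of spaces, not of $H$-spaces, so one cannot directly transport the algebra structure. However, the Künneth isomorphism on $\Z_{(p)}$-modules still goes through, and since $H_*(BP_k;\Z_{(p)})$ is torsion-free in this range, so is its tensor factor $H_*(BP\langle n\rangle_k;\Z_{(p)})$. The main subtlety throughout is ensuring that the product decomposition respects connected components, which is clear since each $BP\langle j\rangle_k$ is path-connected for $k\ge 1$; and that the Hopf-algebra argument in the step above genuinely isolates the $BP\langle n\rangle_k$-tensor factor, which I would verify by observing that the $H$-space splitting induces a coproduct-compatible splitting of the module of indecomposables.
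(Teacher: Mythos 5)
Your proposal matches the paper's own (implicit) argument: the paper states this as Wilson's result and indicates precisely the deduction you carry out, namely combining Wilson's computation of $H_*(BP_k;\Z_{(p)})$ for $k\le f(n)$ from \cite{wilsonthesis1} with the splitting of \myref{wilsonsplitting} to read off torsion-freeness (as a retract/K\"unneth summand) and the polynomial/exterior algebra structure (via the $H$-space splitting for $k<f(n)$) on the factor $BP\langle n\rangle_k$. Your care about the endpoint $k=f(n)$ is consistent with the statement, which only asserts the algebra structure for $k<f(n)$ even (note $f(n)$ is always even, so the odd endpoint case is vacuous).
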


\section{Suspension spectra and generalized chromatic convergence}\label{suspensionspectra}

\subsection{A criterion}

The following result is an abstraction of the argument used in \cite{ssh} showing that suspension spectra are harmonic. 

\begin{thm}\mylabel{criterion}
Let $\cC$ be a thick subcategory of $\Sp$ and let $\cC_0$ be a subcategory of $\Top$ such that $\Sigma^{\infty}_+\cC_0 \subseteq \cC$ and is closed under retracts. Suppose that 
\begin{enumerate}
 \item $\cC_0$ is closed under weak infinite products of spaces
 \item $\cC_0$ contains the Johnson-Wilson spaces $BP\langle n \rangle_{k}$ for all $n$ and $f(n-1) < k \le f(n)$ 
 \item $\cC$ is closed under sequential homotopy limits,
\end{enumerate}
then $\cC$ contains all suspension spectra. 
\end{thm}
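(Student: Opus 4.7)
The strategy abstracts Hopkins and Ravenel's proof in \cite{ssh} that suspension spectra are harmonic. I will construct, for every space $Y$, a tower of spaces $Y \to \cdots \to Y_n \to Y_{n-1} \to \cdots$ in $\cC_0$ with the property that the induced map $\Sigma^\infty_+ Y \to \holim_n \Sigma^\infty_+ Y_n$ is an equivalence of spectra. Granted such a tower, the theorem follows immediately: each $\Sigma^\infty_+ Y_n$ lies in $\cC$ because $\Sigma^\infty_+ \cC_0 \subseteq \cC$, the sequential limit lies in $\cC$ by hypothesis (3), and thickness of $\cC$ (closure under equivalences) then places $\Sigma^\infty_+ Y$ itself in $\cC$.

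The building blocks of the tower are the BP-Wilson spaces $BP_k$. For $k$ in the irreducible range $f(n-1) < k \leq f(n)$, Wilson's splitting \myref{wilsonsplitting} gives
\[BP_k \simeq BP\langle n \rangle_k \times \prod_{j > n} BP\langle j\rangle_{k + 2(p^j - 1)},\]
and a routine inequality check confirms that the index of every factor lies in the irreducible range $(f(\ell-1), f(\ell)]$ stipulated by hypothesis (2). Combined with closure of $\cC_0$ under weak infinite products, this implies that every weak product of BP-Wilson spaces lies in $\cC_0$, and hence its suspension spectrum lies in $\cC$.

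To build the tower for a (pointed, connected) space $Y$, I would iterate in the style of a $BP$-based Adams-type resolution, as in Hopkins--Ravenel: at the $n$-th stage one uses BP-cohomology classes to refine the previous approximation by mapping $Y$ into a product of BP-Wilson spaces, producing $Y_n \in \cC_0$ together with a compatible map from $Y$. Disconnected or unpointed spaces reduce to this case using the standard splitting $\Sigma^\infty_+ Y \simeq \Sigma^\infty Y \vee S^0$ and the thickness of $\cC$. The crucial step is to verify that the resulting comparison $\Sigma^\infty_+ Y \to \holim_n \Sigma^\infty_+ Y_n$ is an equivalence of spectra.

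This last convergence verification is where the main obstacle lies. One must establish that the sequential limit on the spectrum level genuinely recovers $\Sigma^\infty_+ Y$ rather than merely a $BP$-based localization of it; in particular, the argument requires more than bookkeeping of $BP$-homology and is sensitive to the precise order in which Wilson-space approximations are assembled. This is exactly the technical core of the proof in \cite{ssh}, and the content of the present theorem is the assertion that the three closure and containment hypotheses on $\cC$ and $\cC_0$ are precisely those needed to run the Hopkins--Ravenel argument in an arbitrary such subcategory.
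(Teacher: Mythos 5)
There is a genuine gap here, and it sits exactly where you locate it: the convergence of your tower. The resolution you propose --- an unstable $BP$-based Adams-type tower whose stages map $Y$ into products of Wilson spaces --- is not the construction used in \cite{ssh}, and it does not obviously (or, without further input, actually) satisfy $\Sigma^{\infty}_+Y \simeq \lim_n \Sigma^{\infty}_+Y_n$: such a resolution computes at best a $BP$-nilpotent completion of $Y$, and nothing in the hypotheses on $\cC$ identifies the spectrum-level limit with $\Sigma^{\infty}_+Y$ itself. Worse, the finite stages of an Adams tower are iterated \emph{fibers} of maps into products of Wilson spaces, not products of Wilson spaces, so they do not lie in $\cC_0$ and hypothesis (2) does not apply to them directly; handling those fibers requires precisely the machinery you have left out. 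Deferring the convergence to ``the technical core of \cite{ssh}'' does not close the gap, because the Hopkins--Ravenel argument does not proceed this way.

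The actual proof uses two mechanisms missing from your outline. First, one descends from the Johnson--Wilson spaces in the irreducible range to Eilenberg--Mac Lane spaces by iterated fiber sequences: the $v_n$ self-map yields fiber sequences $\BP{n-1}_i \to \BP{n}_{i+1+2(p^n-1)} \to \BP{n}_{i+1}$, whence $K(\Z,m)$, and then $K(A,m)$ for $m \ge 2$, are obtained from spaces of $\cC_0$ by finitely many fiber constructions over simply connected bases (\myref{emsfinitedepth}). Second, to see that $\Sigma^{\infty}_+$ of such a fiber $F \to E \to B$ stays in $\cC$, one uses the geometric form of the Eilenberg--Moore spectral sequence: $\Sigma^{\infty}_+F \simeq \lim_j F_j$ with cofiber sequences $F_{j+1} \to F_j \to C_j$ in which $\Sigma^{j-1}C_j$ is a retract of $\Sigma^{\infty}_+(E \times B^{\times j})$; this is where thickness of $\cC$, closure of $\cC_0$ under products, and closure of $\cC$ under sequential limits all get consumed (\myref{geomemss}). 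Finally, for a simply connected $X$ one climbs the \emph{Postnikov} tower --- not a $BP$-resolution --- whose stages are built from Eilenberg--Mac Lane spaces by principal fibrations and whose convergence $\Sigma^{\infty}_+X \simeq \lim_n \Sigma^{\infty}_+P_nX$ is automatic from the increasing connectivity of $X \to P_nX$. Your proposal is missing the reduction to Eilenberg--Mac Lane spaces, the fiber lemma that makes hypothesis (3) and thickness do real work, and a tower whose convergence can actually be verified.
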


\begin{rmk}
The purpose of the auxiliary category $\cC_0$ is that condition (2) is easier to check for a restrictive collection of spaces. For example, in \cite{ssh}, $\cC$ is the category of harmonic spectra and $\cC_0$ the category of spaces with torsion-free $\Z_{(p)}$-homology. 
\end{rmk}

In order to prove this theorem, we need two lemmata that are of independent interest. We inductively define full subcategories $\cC_i \subseteq \Top$ for all $i \in \N$ by letting $\cC_0$ be a full subcategory of $\Top$ satisfying (1) and (2) of the theorem and declaring $F$ to be in $\cC_i$ if there exists a fiber sequence $F \to E \to B$ with $E,B \in \cC_{i-1}$ and $B$ simply connected. We also set $\cC_{\infty} = \bigcup_i \cC_i$; this yields an ascending filtration
\[\cC_0 \subseteq \cC_1 \subseteq \ldots \subseteq \cC_{\infty} \subseteq \Top.\] 

\begin{lemma}\mylabel{emsfinitedepth}
For every $m\ge 2$, the Eilenberg--Mac Lane space $K(A,m) \in \cC_{\infty}$. 
\end{lemma}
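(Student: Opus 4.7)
The plan is to establish the lemma first when $A = \Z_{(p)}$, by expressing $K(\Z_{(p)},m)$ as an iterated fiber of Johnson--Wilson spaces, and then to reduce the general case via a free $\Z_{(p)}$-resolution.

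The cofiber sequence $\Sigma^{|v_{n+1}|}\BP{n+1} \xrightarrow{v_{n+1}} \BP{n+1} \to \BP{n}$ is also a fiber sequence in spectra; rotating it and applying $\Omega^{\infty-k}$ yields the fiber sequence of spaces
\[
\BP{n}_k \to \BP{n+1}_{k+2p^{n+1}-1} \to \BP{n+1}_{k+1},
\]
whose base is $k$-connected and therefore simply connected whenever $k \ge 1$. I claim that for every $n \ge 0$ and every $k > f(n-1)$ one has $\BP{n}_k \in \cC_j$ with $j := \max(0, k-f(n))$. This is proved by strong induction on $j$. For $j = 0$, $k$ lies in $(f(n-1), f(n)]$ and condition (2) of \myref{criterion} places $\BP{n}_k$ in $\cC_0$. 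For $j \ge 1$, a short arithmetic check using $f(n+1) - f(n) = 2p^{n+1}$ shows that the indices $k+2p^{n+1}-1$ and $k+1$ at level $n+1$ both strictly exceed $f(n)$ and have respective deficits $j-1$ and $\max(0,\, j+1-2p^{n+1})$ at that level, each strictly less than $j$. The inductive hypothesis places both $\BP{n+1}$-spaces in $\cC_{j-1}$, and the displayed fiber sequence then puts $\BP{n}_k$ in $\cC_j$. Specializing to $n = 0$ yields $K(\Z_{(p)}, m) = \BP{0}_m \in \cC_{\max(0, m-2)} \subseteq \cC_\infty$ for every $m \ge 1$.

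For a general abelian group $A$, first observe that each $\cC_i$ is closed under weak infinite products of spaces: this follows by induction on $i$, with base case condition (1) of \myref{criterion} and inductive step the fact that a weak product of fiber sequences with simply connected bases is again such a fiber sequence. In particular, for any free $\Z_{(p)}$-module $F$, the space $K(F, m+1)$ is a weak product of copies of $K(\Z_{(p)}, m+1)$ and hence lies in $\cC_\infty$. Replacing $A$ by its $p$-localization and choosing a free $\Z_{(p)}$-resolution $0 \to F_1 \to F_0 \to A \to 0$, the induced fiber sequence
\[
K(A, m) \to K(F_1, m+1) \to K(F_0, m+1)
\]
has simply connected base for $m \ge 1$, whence $K(A, m) \in \cC_\infty$ for every $m \ge 2$.

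The main obstacle is carrying out the deficit induction cleanly---specifically, verifying the deficit arithmetic in the inductive step and ensuring that both new indices at level $n+1$ remain strictly above $f(n)$ so that the inductive hypothesis is applicable in the admissible region; the ancillary closure of each $\cC_i$ under weak products is needed only for the final reduction from general $A$ to $\Z_{(p)}$.
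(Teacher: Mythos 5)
Your proposal is correct and follows essentially the same route as the paper: the $v_{n+1}$-fiber sequences among Johnson--Wilson spaces give $\BP{n}_{i+f(n)} \in \cC_i$ (your deficit induction is exactly the bookkeeping the paper leaves as ``it follows easily''), the case $n=0$ handles $K(\Z_{(p)},m)$, and the free resolution step is precisely what the paper means by invoking that $\Mod_{\Z}$ has global dimension $1$. Your explicit verification of the deficit arithmetic and of the closure of each $\cC_i$ under weak products are welcome elaborations, not deviations.
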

\begin{proof}
By assumption, $\cC_0$ contains $BP\langle n \rangle_{k}$ for all $n$ and $f(n-1) < k \le f(n)$. First note that the self map $v_n\colon \Sigma^{2(p^n-1)}\BP{n} \to \BP{n}$ induces fiber sequences
\[ BP\langle n-1 \rangle_i \to BP\langle n \rangle_{i+1+2(p^n-1)} \to BP\langle n \rangle_{i+1}\]
for all $i$. It follows easily that, for any $i \ge 0$, $BP\langle n \rangle_{i + f(n)} \in \cC_i$. In particular, $K(\Z,m) \simeq \BP{0}_m \in \cC_{m}$ for all $m \ge 2$. Since $\Mod_{\Z}$ has global dimension 1, this implies $K(A,m) \in \cC_{\infty}$ for all abelian groups $A$ and $m \ge 2$. 
\end{proof}

\begin{lemma}\mylabel{geomemss}
If $F \to E \to B$ is a fiber sequence of spaces with $E,B \in \cC_{i}$, $B$ simply connected and suppose $\Sigma^{\infty}_+\cC_{i-1}$, then $\Sigma^{\infty}_+F \in \cC$.
\end{lemma}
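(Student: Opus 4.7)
The plan is to realize $F$ as the totalization of a cosimplicial space built from $E$ and $B$, apply $\Sigma^{\infty}_+$ termwise, and conclude via the closure properties of $\cC$. Associated to the fibration $p\colon E\to B$ is a cosimplicial space $X^{\bullet}$ with $X^n = E\times B^n$ whose coface and codegeneracy maps are built from $p$, the basepoint of $B$, the diagonal of $B$, and the projections; since $B$ is simply connected, the classical Rector/Bousfield--Kan convergence result provides an equivalence $F\simeq \mathrm{Tot}(X^{\bullet})$. Concretely, $F$ is the inverse limit of the tower of partial totalizations $T^k = \mathrm{Tot}^k X^{\bullet}$ (with $T^0 = E$), and the fibers of the maps $T^{k+1}\to T^k$ have connectivity strictly increasing in $k$.

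First I would verify that each level of the cosimplicial spectrum $Y^{\bullet} := \Sigma^{\infty}_+ X^{\bullet}$ lies in $\cC$. An induction on $i$ shows that $\cC_{i-1}$ is closed under finite products: given two fiber sequences of the form required by the definition of $\cC_{i-1}$, their product is a fiber sequence of the same type, and a product of simply connected spaces is simply connected. Consequently every $X^n = E\times B^n$ lies in $\cC_{i-1}$, so by the inductive hypothesis $Y^n = \Sigma^{\infty}_+(E\times B^n)$ belongs to $\cC$. Next, the partial totalizations $\mathrm{Tot}^k(Y^{\bullet})$ are assembled from the $Y^n$ by finite homotopy limits, hence lie in $\cC$ by the thickness of $\cC$, and therefore the inverse limit $\mathrm{Tot}(Y^{\bullet})$ lies in $\cC$ by hypothesis (3) of \myref{criterion}.

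The hard part will be to identify $\Sigma^{\infty}_+ F$ with $\mathrm{Tot}(Y^{\bullet})$, since $\Sigma^{\infty}_+$ does not in general preserve inverse limits. To get around this I would exploit the increasing connectivity of the Rector tower $T^{\bullet}$, which is a consequence of $B$ being simply connected: for any fixed $m\ge 0$, the maps $T^{k+1}\to T^k$ are eventually $(m+1)$-connected, so on stable homotopy in any bounded range the tower stabilizes. This forces the natural map $\Sigma^{\infty}_+F\to \lim_k\Sigma^{\infty}_+T^k$ to be an equivalence, and a parallel connectivity estimate matches $\lim_k\Sigma^{\infty}_+T^k$ with the totalization $\mathrm{Tot}(Y^{\bullet})$. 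Combining these identifications with the previous paragraph yields $\Sigma^{\infty}_+F\in\cC$, as required.
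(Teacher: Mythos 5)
Your argument is essentially the paper's: both realize $F$ as the totalization of the cosimplicial object $E\times B^{\times\bullet}$ coming from the geometric Eilenberg--Moore construction, place the levels in $\cC$ using closure of $\cC_{i-1}$ under finite products together with the inductive hypothesis, build the partial totalizations inside $\cC$ by thickness, and pass to the limit using hypothesis (3). The only difference is that the paper delegates the key identification $\Sigma^{\infty}_+F\simeq\lim_j F_j$ to Lemma 17 of Hopkins--Ravenel, whereas you sketch the underlying connectivity argument (which relies on $B$ being simply connected) yourself.
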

\begin{proof}
This is proven as in~\cite[Lemma 17]{ssh}, using the geometric construction of the Eilenberg-Moore spectral sequence. Indeed, the equivalence $F \simeq \mathrm{Tot}(E \times B^{\times \bullet})$ lifts to a presentation
\[\Sigma^{\infty}_+F \simeq \lim_j F_j,\] 
where the spectra $F_j$ fit into cofiber sequences $F_{j+1} \to F_j \to C_i$ with $\Sigma^{j-1} C_i$ a retract of $\Sigma^{\infty}_+(E \times B^{\times j})$ and $F_1 = \Sigma^{\infty}_+E$. Since $\cC_{i-1}$ is closed under finite products of spaces by (1), $E \times B^{\times j} \in \cC_{i-1}$ and thus $F_j \in \cC$ for all $j$ by induction. Hence $\Sigma^{\infty}_+F \in \cC$ by (3).
\end{proof}

\begin{proof}[Proof of \myref{criterion}]
Since $\Sigma^{\infty}_+\cC_0 \subseteq \cC$, \myref{geomemss} implies inductively that $\Sigma^{\infty}_+\cC_{\infty} \allowbreak \subseteq \cC$. If $X$ is a simply connected space, then $X$ has a convergent Postnikov tower and \myref{emsfinitedepth} together with the closure of $\sC$ under sequential limits shows that $\Sigma^{\infty}_+X \in \cC$.
\end{proof}

\subsection{Generalized chromatic convergence}

The aim of this section is to prove a generalization of the chromatic convergence theorem of Hopkins and Ravenel. To this end, let $\overline{BP}$ be the fiber of the unit map $S^0 \to BP$ and recall that a map $f\colon X \to Y$ is said to be $n$-phantom if $\Hom(F,f)=0$ for all finite spectra $F$ of dimension less than $n+1$. 

\begin{defn}
A spectrum $X$ is called $BP$-convergent if for all $i \ge 0$ there exists some $s(i)$ such that $\overline{BP}^{s(i)} \wedge X \to X$ is $i$-phantom. 
\end{defn}

In other words, if $X$ is $BP$-convergent, then $E_{\infty}^{s,s+i}(X) = 0$ for $s \ge s(i)$ in the Adams--Novikov spectral sequence for $X$, i.e., there exists a vanishing line (at $E_{\infty}$) determined by the function $s(i)$. It follows that every non-zero element in $\pi_iX$ has Adams--Novikov filtration less than $s(i)+1$. 

As a formal consequence of the proof of the smash product theorem, Ravenel and Hopkins~\cite[8.6]{ravbook2} obtain: 

\begin{lemma}\mylabel{bpconvergent}
If $X$ is connective, then $X$, $L_iX$, and thus $C_iX$ are $BP$-convergent for all $i \ge 0$.
\end{lemma}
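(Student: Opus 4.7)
The plan is to establish the three statements in turn, with the cases of $X$ and $C_iX$ being formal and the case of $L_iX$ being the substantive content drawn from the proof of the smash product theorem in \cite[8.6]{ravbook2}.

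\emph{Connective $X$.} This is a direct connectivity argument. The fiber $\overline{BP}$ is $1$-connective, since the unit $S^0 \to BP$ is an isomorphism on $\pi_0$ (after $p$-localization). Hence $\overline{BP}^{\wedge s} \wedge X$ is $(s+c-1)$-connective whenever $X$ is $(c-1)$-connective, and any map from a finite spectrum of dimension $\le i$ into such a highly connected spectrum is null once $s \ge i - c + 2$. The linear function $s(i) = i - c + 2$ thus gives the required bound.

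\emph{The case of $L_iX$.} The heart of the argument is the uniform vanishing line for $L_iS^0$ established in \cite[8.6]{ravbook2} as part of the proof of the smash product theorem: there exists a linear function $s_0(i)$ such that the canonical map $\overline{BP}^{\wedge s_0(i)} \wedge L_iS^0 \to L_iS^0$ is $i$-phantom. Since $L_i$ is smashing, $L_iX \simeq L_iS^0 \wedge X$. To pass to arbitrary connective $X$, write $X = \colim_\alpha X_\alpha$ as a filtered colimit of finite connective subspectra. For any finite $F$ of dimension $\le i$, Spanier--Whitehead duality identifies $[F, \overline{BP}^{\wedge s} \wedge L_iS^0 \wedge X_\alpha]$ with $[F \wedge DX_\alpha, \overline{BP}^{\wedge s} \wedge L_iS^0]$, and each $F \wedge DX_\alpha$ is finite of dimension $\le i$ since $X_\alpha$ is connective. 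Ravenel's vanishing line therefore kills these maps uniformly for $s \ge s_0(i)$, and commuting $[F, -]$ with the filtered colimit yields the desired $i$-phantom property for $L_iX$.

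\emph{The case of $C_iX$.} This is a two-out-of-three argument applied to the cofiber sequence $C_iX \to X \to L_iX$. Concretely, setting $s(i) = \max(s_1(i), s_2(i+1))$ where $s_1, s_2$ are the bounds produced above for $X$ and $L_iX$ respectively, and chasing the map of cofiber sequences $\overline{BP}^{\wedge s} \wedge C_iX \to \overline{BP}^{\wedge s} \wedge X \to \overline{BP}^{\wedge s} \wedge L_iX$ against $C_iX \to X \to L_iX$ in the induced long exact sequences yields the required phantom property, with the dimensional shift $i \mapsto i+1$ accounting for the connecting homomorphism.

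The main obstacle is the step for $L_iX$: everything depends on Ravenel's vanishing line for $L_iS^0$, which is among the deeper results of \cite{ravbook2}. Once that is in hand, the passage to general connective $X$ is a straightforward duality-plus-colimit argument, and the case of $C_iX$ reduces to formal diagram-chasing in the long exact sequences induced by the cofiber sequence $C_iX \to X \to L_iX$.
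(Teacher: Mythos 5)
The paper does not actually prove this lemma: it is quoted verbatim from Ravenel \cite[8.6]{ravbook2} as a byproduct of the proof of the smash product theorem, so your reconstruction can only be measured against Ravenel's argument. Your first two steps match it in substance and are fine: the connectivity argument for connective $X$ (note it proves something stronger, namely that $[F,\overline{BP}^{\wedge s}\wedge X]=0$ outright once $s$ exceeds the dimension of $F$, not merely that the map to $[F,X]$ vanishes), and the reduction of $L_iX$ to the vanishing line for $L_iS^0$ via the smashing property, Spanier--Whitehead duality, and commuting $[F,-]$ past a filtered colimit of finite connective subspectra.

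The step that does not close up as written is the one for $C_iX$. A genuine ``two-out-of-three'' chase fails here: given $\phi\colon F\to\overline{BP}^{\wedge s}\wedge C_iX$, knowing only that $\overline{BP}^{\wedge s}\wedge X\to X$ is $i$-phantom tells you that the image of $\phi$ in $[F,C_iX]$ dies in $[F,X]$, hence lifts along the connecting map to some class in $[F,\Sigma^{-1}L_iX]$ --- but that lift is completely uncontrolled, so you cannot conclude it is hit by something of high Adams--Novikov filtration and hence zero. The chase is rescued precisely by the stronger vanishing from your first step: since $X$ is connective, $[F,\overline{BP}^{\wedge s}\wedge X]=0$ for $s>\dim F$, so in the long exact sequence for $\overline{BP}^{\wedge s}\wedge(\Sigma^{-1}L_iX\to C_iX\to X)$ the class $\phi$ itself lifts to $[F,\overline{BP}^{\wedge s}\wedge\Sigma^{-1}L_iX]$; naturality of the augmentation then lets the bound $s_2(i+1)$ for $L_iX$ (applied to $\Sigma F$, of dimension $i+1$) kill the image in $[F,C_iX]$. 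Your choice of bound $\max(s_1(i),s_2(i+1))$ is the right one, but you should make explicit that it is the absolute vanishing of $[F,\overline{BP}^{\wedge s}\wedge X]$, and not mere $i$-phantomness of $\overline{BP}^{\wedge s}\wedge X\to X$, that makes the diagram chase terminate.
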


The next result connects the projective dimension of a spectrum $X$ to the Adams--Novikov filtration of $C_mX$. 

\begin{lemma}\mylabel{pdimzeromap}
If $X$ is a connective spectrum with projective dimension at most $n$, then the natural map $C_{m+1}X \to C_{m}X$ is $BP$-acyclic for all $m \ge n$. 
\end{lemma}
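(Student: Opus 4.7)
My approach is to translate the claim into a statement about the $BP$-module $M := BP \wedge X$ via the smashing property of $L_m$, and then produce the required nullity using the $v_{m+1}$-torsion-freeness provided by \myref{bptorsionfree}.

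By Hopkins and Ravenel's smash product theorem, each $L_k$ is smashing, so there are natural equivalences $BP \wedge L_k X \simeq L_k(BP \wedge X)$ and $BP \wedge C_k X \simeq C_k(BP \wedge X)$. It therefore suffices to show that the $BP$-linear map $C_{m+1}M \to C_m M$ is null for all $m \ge n$. Applying the octahedral axiom to the composite $C_{m+1}M \to C_m M \to M$ identifies the cofiber of $C_{m+1}M \to C_m M$ with the monochromatic fiber $M_{m+1}M := \fib(L_{m+1}M \to L_m M)$, so nullity of the map is equivalent to a $BP$-linear splitting of the cofiber sequence
\[
C_{m+1}M \to C_m M \to M_{m+1}M.
\]

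To produce this splitting, I would exploit that $\pi_*M = BP_*X$ is $v_{m+1}$-torsion-free whenever $m \ge n$, by \myref{bptorsionfree}. Together with the Johnson--Wilson identification $\BP{k}_*X \simeq BP_*X \otimes_{BP_*}\BP{k}_*$ for $k \ge n$ (part (3) of the Johnson--Wilson theorem recalled in Section~\ref{truncatedbp}), this yields a Landweber-exact-type description of $L_kM$ via the Johnson--Wilson tower, under which $L_{m+1}M \to L_m M$ is induced by the truncation $\BP{m+1} \to \BP{m}$. Injectivity of $v_{m+1}$ on $\pi_*M$ then furnishes a canonical $BP$-linear section on homotopy, which one lifts to a splitting of the cofiber sequence above.

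The main obstacle I anticipate is this last step: making the Landweber-exact identification of $L_kM$ precise for $BP$-modules whose homotopy is of finite projective dimension but not necessarily Landweber-flat. A viable alternative is an induction on $n$: for $n = 0$, $\pi_*M$ is free by Conner--Smith and $M$ is a wedge of shifts of $BP$, reducing the claim to a direct computation for $BP$ itself; for the inductive step, choose a short exact sequence $0 \to K \to F \to \pi_*M \to 0$ with $F$ free and $\pdim K < n$, realize it as a cofiber sequence of $BP$-modules, and invoke the inductive hypothesis together with the base case via the resulting long exact sequence.
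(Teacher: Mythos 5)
Your opening moves are sound and agree with the paper's: reduce to $BP$-modules using that the $L_k$ are smashing, identify $\mathrm{cof}(C_{m+1}M \to C_m M)$ with the monochromatic layer, and feed in the $v_{m+1}$-torsion-freeness of $BP_*X$ from \myref{bptorsionfree}. The step that is supposed to convert torsion-freeness into the conclusion is where both of your variants break down. Your ``Landweber-exact-type description of $L_kM$ via the Johnson--Wilson tower'' conflates two different towers: the truncated Brown--Peterson spectra $\BP{k}$ kill the generators $v_{k+1}, v_{k+2},\ldots$, whereas the chromatic localizations $L_k$ invert $v_k$ on the $k$-th torsion layer. Concretely, $L_mBP$ is nothing like $\BP{m}$ (its homotopy contains all of $BP_*$ together with a shifted copy of $BP_*/(p^{\infty},\ldots,v_m^{\infty})$, whereas $\BP{m}_*=\Z_{(p)}[v_1,\ldots,v_m]$), and $L_{m+1}M\to L_mM$ is not induced by the truncation $\BP{m+1}\to\BP{m}$. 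The objects you want are the chromatic resolution spectra $N_mBP$ and $M_mBP=v_m^{-1}N_mBP$: smashing $N_{m+1}BP\to M_{m+1}BP\to N_{m+2}BP$ with $X$ and using smashingness identifies $N_{m+1}BP\wedge X\simeq \Sigma^{m+1}BP\wedge C_mX$, and the connecting map then vanishes \emph{on homotopy} as soon as $\pi_*(N_{m+1}BP\wedge X)$ is $v_{m+1}$-torsion-free, which is what \myref{bptorsionfree} supplies. In particular you do not need a splitting or a nullhomotopy at all: the lemma, as it is used in \myref{finhomdimcc}, only requires the map to induce zero on $BP$-homology (i.e.\ to raise Adams--Novikov filtration), and that is exactly what injectivity of $\pi_*f_{m+1}$ gives via the long exact sequence. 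Demanding a $BP$-linear retraction of $C_mM\to M_{m+1}M$ is a strictly stronger statement that the torsion-freeness does not obviously provide.

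Your fallback induction also does not close. Given the cofiber sequence $\tilde K\to\tilde F\to M$ realizing $0\to K\to F\to\pi_*M\to 0$, knowing that $C_{m+1}\to C_m$ vanishes on homotopy for $\tilde K$ and for $\tilde F$ only lets you conclude that the \emph{composite} $\pi_*C_{m+2}M\to\pi_*C_{m+1}M\to\pi_*C_mM$ is zero: an element in the image of $\pi_*C_{m+2}M$ has vanishing boundary in $\pi_{*-1}C_{m+1}\tilde K$, hence lifts to $\pi_*C_{m+1}\tilde F$, where the next map kills it; nothing forces a single map to vanish. This is the standard ``ghost maps compose'' phenomenon, and iterating it yields only that composites of $n+1$ consecutive maps in the tower $\{C_mX\}$ are zero in $BP$-homology for all $m\ge 0$. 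That weaker statement would still give the pro-triviality needed in \myref{finhomdimcc}, but it is not the lemma as stated: the single-step vanishing in the sharp range $m\ge n$ comes from the Johnson--Wilson torsion-freeness input, not from a filtration count.
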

\begin{proof}
Let $X$ be a connective spectrum with $\pdim$ $(BP_*X) =n$. By \myref{bptorsionfree}, $BP_*(X)$ is $v_m$ torsion-free for every $m > n$. Fix $m > n$ and consider the commutative diagram
\[\resizebox{\columnwidth}{!}{\xymatrix{N_mBP \wedge X \ar[r]^{f_m} \ar[d]_{\simeq} & M_mBP \wedge X \ar[r] \ar[d]_{\simeq} & N_{m+1} BP \wedge X \ar[r] \ar[d]_{\simeq} & \Sigma N_mBP \wedge X \ar[d]^{\simeq} \\
\Sigma^{m} BP \wedge C_{m-1}X \ar[r] & \Sigma^mBP \wedge M_mX \ar[r] & \Sigma^{m+1}BP \wedge C_mX \ar[r] & \Sigma^{m+1} BP \wedge C_{m-1}X
}}\]
of cofiber sequences, where the upper row realizes the chromatic resolution as in \cite[Ch. 8]{ravbook2}. By construction, $f_m$ is injective in homotopy if and only if $BP_*(X)$ is $v_m$ torsion-free, thus the natural map $C_mX \to C_{m-1}X$ is $BP$-acyclic.
\end{proof}

We are ready to put the pieces together to generalize \myref{chromaticconvergence} to connective spectra of finite projective dimension. 

\begin{thm}\mylabel{finhomdimcc}
If $X$ is a connective spectrum with finite projective dimension, then $X$ is chromatically complete.
\end{thm}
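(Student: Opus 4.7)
The strategy is to show that $\holim_m C_m X \simeq 0$, which is equivalent to chromatic completeness of $X$: applying $\holim_m$ to the fiber sequences $C_m X \to X \to L_m X$ produces a fiber sequence $\holim_m C_m X \to X \to \bC X$ in which the right-hand map is the unit of chromatic completion.

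By the Milnor $\lim^1$-sequence
$$0 \to \lim^1_m \pi_{i+1} C_m X \to \pi_i \holim_m C_m X \to \lim_m \pi_i C_m X \to 0,$$
the vanishing of $\holim_m C_m X$ reduces to showing that the tower $\{\pi_j C_m X\}_m$ is pro-zero for every $j$: for each $m_0 \ge n$ there should exist $m_1 \ge m_0$ such that $\pi_j C_{m_1} X \to \pi_j C_{m_0} X$ is the zero map. This Mittag--Leffler-type condition forces both $\lim$ and $\lim^1$ to vanish.

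The heart of the argument combines the two preceding lemmas. By \myref{pdimzeromap}, for each $m \ge n$ the tower map $C_{m+1} X \to C_m X$ is $BP$-acyclic, and thus admits a factorization $C_{m+1} X \to \overline{BP} \wedge C_m X \to C_m X$ through the canonical map arising from the fiber sequence $\overline{BP} \wedge C_m X \to C_m X \to BP \wedge C_m X$. Iterating this factorization and using naturality of the $\overline{BP}$-decomposition, the $s$-fold composite $C_{m_0+s} X \to C_{m_0} X$ factors through the canonical map $\overline{BP}^{\wedge s} \wedge C_{m_0} X \to C_{m_0} X$. On the other hand, by \myref{bpconvergent} the spectrum $C_{m_0} X$ is $BP$-convergent, so for every $j$ there exists some $s(j)$ such that the natural map $\overline{BP}^{\wedge s} \wedge C_{m_0} X \to C_{m_0} X$ is $j$-phantom, and in particular zero on $\pi_j$, whenever $s \ge s(j)$. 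Setting $m_1 = m_0 + s(j)$ yields the required pro-zero condition.

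The main obstacle is the factorization step: one must upgrade the statement of \myref{pdimzeromap} to an honest factorization of spectra $C_{m+1} X \to \overline{BP} \wedge C_m X \to C_m X$ (as opposed to merely the composite $C_{m+1} X \to C_m X \to BP \wedge C_m X$ being null on homotopy), so that the iteration and the naturality argument above can be applied. Once this lift is available, the remainder of the proof is routine.
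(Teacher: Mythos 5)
Your proof is correct and follows essentially the same route as the paper: the paper likewise combines \myref{pdimzeromap} and \myref{bpconvergent} to show that the $k$-fold composite $C_{m+k}X \to C_mX$ has Adams--Novikov filtration at least $k$, hence is zero on $\pi_i$ once $k \ge s(i)$, so the tower $\{C_mX\}$ is pro-trivial and its limit, the fiber of $X \to \bC X$, vanishes. The factorization through $\overline{BP}^{\wedge k}\wedge C_mX$ that you single out as the main obstacle is precisely what the paper's phrase ``Adams--Novikov filtration at least $k$'' encodes, and the paper treats it as immediate from \myref{pdimzeromap} rather than as a step needing separate justification.
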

\begin{proof}
If $X$ is connective with projective dimension $n \in \N$ and $m \ge n$, then the map $C_{m+k}X \to C_{m}X$ has Adams--Novikov filtration at least $k$ for all $k \ge 0$ by \myref{pdimzeromap}. Therefore, \myref{bpconvergent} implies that $\pi_iC_{m + s(i)}X \to \pi_iC_mX$ is zero for any $i$, so that the tower
\[\ldots \to C_j X \to C_{j-1}X \to \ldots \to C_0X\] 
is pro-trivial. This gives the claim. 
\end{proof}

\begin{cor}\mylabel{jwgood}
All connective spectra with free homology are chromatically complete. In particular, this applies to the Johnson--Wilson spaces $BP\langle n \rangle_{k}$ for any $n$ and $k \le f(n)$. 
\end{cor}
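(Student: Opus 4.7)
The plan is to deduce this directly from \myref{finhomdimcc} by showing that a connective spectrum with free $\Z_{(p)}$-homology has projective $BP$-dimension zero, and then verify the freeness hypothesis for the Johnson--Wilson spaces using \myref{homologyjws}.

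First I would argue that if $X$ is a connective spectrum with $H_*(X,\Z_{(p)})$ free as a graded $\Z_{(p)}$-module, then $BP_*(X)$ is itself a free $BP_*$-module. This is \cite[Prop.~3.10]{johnsonwilson}, and as noted in the paragraph preceding \myref{bptorsionfree}, the finite type hypothesis is not needed: the Atiyah--Hirzebruch spectral sequence $H_s(X,BP_t) \Rightarrow BP_{s+t}(X)$ still converges by \cite[Cor.~4.2.6]{kochmanbook}, and freeness of $H_*(X,\Z_{(p)})$ forces the $E^2$-page to identify with $H_*(X,\Z_{(p)}) \otimes_{\Z_{(p)}} BP_*$, collapses, and splits. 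In particular $\pdim(BP_*X) = 0$, so \myref{finhomdimcc} applies and $X$ is chromatically complete.

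For the second assertion, \myref{homologyjws} shows that for $k \le f(n)$ the connected component of $BP\langle n\rangle_k$ has torsion-free $\Z_{(p)}$-homology which is either a polynomial or an exterior algebra. Either way, this homology is free as a graded $\Z_{(p)}$-module, so the first part of the corollary applies and gives chromatic completeness. (One may wish to remark that the extra basepoint component contributes only a copy of $\Z_{(p)}$ in degree zero, which does not affect freeness or projective dimension.)

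There is no real obstacle here beyond the two inputs already assembled: the finiteness criterion \myref{finhomdimcc} and the Johnson--Wilson identification of $BP_*(X)$ for connective $X$ with free $\Z_{(p)}$-homology. The only mild subtlety is ensuring that the argument of \cite[Prop.~3.10]{johnsonwilson} goes through without the finite type hypothesis, which the paper has already addressed via convergence of the AHSS.
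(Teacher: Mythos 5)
Your proposal is correct and follows essentially the same route as the paper: both deduce from \cite[Prop.~3.10]{johnsonwilson} (with the finite type hypothesis removed via convergence of the Atiyah--Hirzebruch spectral sequence, as discussed before \myref{bptorsionfree}) that free $\Z_{(p)}$-homology forces $BP_*(X)$ to be free, hence of projective dimension zero, so that \myref{finhomdimcc} applies, and both invoke \myref{homologyjws} for the Johnson--Wilson spaces. Your added remarks on the collapse of the spectral sequence and the basepoint component are slightly more explicit than the paper's, but the argument is the same.
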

\begin{proof}
By the proof of \myref{bptorsionfree}, for a connective spectrum $X$, $H_*(X,\Z_{(p)})$ torsion-free over $\Z_{(p)}$ implies that $BP_*(X)$ is torsion-free over $BP_*$, thus \myref{finhomdimcc} applies. The claim about the Johnson--Wilson spaces $BP\langle n \rangle_{k}$ now follows immediately from \myref{homologyjws}.
\end{proof}

\section{Idempotent approximation}

We briefly recall the basic properties of idempotent monads and their algebras, and introduce the theory of idempotent approximation of Casacuberta and Frei. 

\subsection{Idempotent monads and their algebras}

In order to state the main theorem, we need some terminology that will also be used in the next section. 

\begin{defn}
A monad $L = (L,\mu, \eta)$ on a category $\sC$ is called idempotent if it satisfies any of the following equivalent conditions:
\begin{enumerate}
 \item $\mu\colon L^2 \to L$ is a natural equivalence
 \item For every $c \in \Alg_L$, the action map $Lc \to c$ is an equivalence
 \item The forgetful functor $\Alg_L \to \sC$ is fully faithful.
\end{enumerate}
\end{defn}

\begin{lemma}\mylabel{idempotentalg}
Let $L$ be an idempotent monad on a category $\sC$, then for any $c \in \sC$ the following conditions are equivalent:
\begin{enumerate}
 \item $c$ admits an $L$-algebra structure
 \item The unit map $c \to Lc$ is an equivalence. 
\end{enumerate}
\end{lemma}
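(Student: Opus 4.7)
The plan is to prove the two implications in turn, exploiting condition (ii) of the definition of idempotent monad (the action map of any $L$-algebra is an equivalence) together with the monad unit and associativity axioms.

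For (1) $\Rightarrow$ (2), I would start with an $L$-algebra structure on $c$ given by an action map $a \colon Lc \to c$. By condition (ii) of the definition of idempotent, $a$ is an equivalence. On the other hand, one of the unit axioms of an algebra over a monad says $a \circ \eta_c = \id_c$, so $\eta_c$ is a section of $a$. A section of an equivalence is itself an equivalence (and in fact equals the inverse), so $\eta_c \colon c \to Lc$ is an equivalence, giving (2).

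For (2) $\Rightarrow$ (1), I would define the candidate action map $a \colon Lc \to c$ to be $\eta_c^{-1}$, and then verify the two algebra axioms. The unit axiom is automatic: $a \circ \eta_c = \eta_c^{-1} \circ \eta_c = \id_c$. For the associativity axiom, I need to check that $a \circ L(a) = a \circ \mu_c$ as maps $L^2c \to c$. Here I would use the standard monad identities $\mu \circ L\eta = \id_L = \mu \circ \eta_L$, which give $\mu_c \circ L(\eta_c) = \id_{Lc}$. Since $\mu_c$ is an equivalence by idempotency (condition (i)), this identity forces $L(\eta_c) = \mu_c^{-1}$, hence $L(a) = L(\eta_c^{-1}) = L(\eta_c)^{-1} = \mu_c$. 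Post-composing with $a$ gives $a \circ L(a) = a \circ \mu_c$, as required.

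There is no real obstacle here; the argument is formal and uses nothing beyond the monad axioms and the defining property of idempotency in the form (ii) (which itself follows from (i) as the action $a$ on $Lc$ is $\mu_c$). The only thing to be careful about is the direction of the unit axiom (right-unital versus left-unital conventions), but this only affects which composite $a \circ \eta_c$ or $\eta_c \circ a$ is required to be the identity, and either convention yields the same conclusion.
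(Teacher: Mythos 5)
Your proof is correct and is the standard formal argument; the paper itself states this lemma without proof, so there is nothing to compare against, but what you supply is exactly what a reader would be expected to fill in. Both directions are verified properly: the section-of-an-equivalence observation for (1) $\Rightarrow$ (2), and the identification $L(\eta_c)=\mu_c^{-1}$ (forced by the unit law $\mu_c\circ L(\eta_c)=\mathrm{id}$ together with invertibility of $\mu_c$) to check associativity for (2) $\Rightarrow$ (1).
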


Let $M$ be an arbitrary monad on a locally presentable category $\sC$ and denote by $\C_M$ the full subcategory of $\Alg_M$ on those algebras $c$ for which the unit map induces an equivalence $c \xrightarrow{\sim} Mc$ . Motivated by \myref{criterion}, we are interested in conditions on $M$ such that the category $\C_M$ is closed under (sequential) limits. 

Note that for some well-known examples the category $\C_M$ either coincides with $\Alg_M$ or is trivial, e.g., if $M$ is the free monoid monad on the category of sets. In these cases, $\C_M$ is trivially closed under all limits. However, the next example shows that, in general, $\C_M$ cannot be expected to be closed under limits, not even sequential limits or infinite products. 

\begin{ex}
If we let $M = \beta$ be the ultra-filter (Stone-\v{C}ech) monad on the category of sets, then $\C_M$ is precisely the category of finite sets, which is not closed under inverse limits. 
\end{ex}

By \myref{idempotentalg}, every idempotent monad $M$ has the property that $\C_M = \Alg_M$ and hence is closed under limits. It is therefore natural to ask if chromatic completion is idempotent, given that it is the limit of idempotent monads; an affirmative answer would imply that all suspension spectra are chromatically complete. 

Note that the category $\Monad(\sC)$ of monads on $\sC$ can be identified with the category of monoids in the functor category $\Fun(\sC,\sC)$ with monoidal structure given by composition of functors, and is thus closed under limits computed in the functor category. However, the subcategory of idempotent monads does not have this property as the following examples demonstrates. 

\begin{ex}
Let $R$ be a non-noetherian commutative ring, and $I$ a non-finitely generated ideal in $R$. Clearly, $I$-adic completion on the category of all $R$-modules is a monad, which is constructed as the limit of the idempotent and exact monads $R/I^m \otimes -$. Yekutieli shows in~\cite{noetherian} that in the case of a polynomial ring $R = k[x_1,\ldots]$ in countably infinitely many variables and $I$ the maximal ideal corresponding to $0$, $I$-adic completion is not idempotent.
\end{ex}

This motivates the study of idempotent approximations to monads. 

\subsection{Idempotent approximations to monads}

\begin{defn}
An idempotent approximation to a monad $M$ on $\sC$ is an idempotent monad $\hat{M}$ on $\sC$ together with a map of monads $\hat{M} \to M$ which is terminal among all maps from idempotent monads to $M$.
\end{defn}

Casacuberta and Frei~\cite{casacuberta} give a convenient characterization of idempotent approximations, without any conditions on the underlying category $\sC$. Moreover, their perspective allows to easily deduce some basic properties of idempotent approximations.  

\begin{thm}[Casacuberta--Frei]\mylabel{idempapproxtest} Let $(M,\eta, \mu)$ be a monad on a category $\sC$. If $(\hat{M},\hat{\eta},\hat{v})$ is an idempotent monad on $\sC$ that inverts the same class of morphisms in $\sC$ as $M$, then $\hat{M}$ is the idempotent approximation to $M$. In particular, it satisfies the following properties:
\begin{enumerate}
 \item There exists a unique morphism of monads $\lambda\colon \hat{M} \to M$, which is terminal among morphisms from idempotent monads to $M$. Furthermore, if $\sC$ is complete and well powered, $\lambda$ is a monomorphism.
 \item Both $M\hat{\eta}$ and $\hat{\eta}M$ are isomorphisms.
 \item For any $X \in \sC$, the following are equivalent
  \begin{enumerate}
   \item $\eta_X$ is a $\hat{M}$-equivalence
   \item $\eta_{MX}$ is an isomorphism
   \item $\lambda_X$ is an isomorphism.
  \end{enumerate}
\end{enumerate}
\end{thm}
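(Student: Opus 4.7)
The overall strategy is to recognize $\hat M$ as the localization of $\sC$ at the class of $\hat M$-equivalences, which by hypothesis coincides with the class of $M$-equivalences; this rigidity forces both existence and uniqueness of $\lambda$ and makes terminality largely formal. I would begin by establishing property (2). For any $X$, idempotence of $\hat M$ gives $\hat M\hat\eta_X$ an inverse (namely $\hat v_X$), so $\hat\eta_X$ is an $\hat M$-equivalence, and hence by hypothesis an $M$-equivalence, i.e., $M\hat\eta_X$ is an isomorphism. Similarly, every free $M$-algebra $MY$ carries an $\hat M$-algebra structure pulled back from its $M$-algebra structure, so $\hat\eta_{MY}$ is an isomorphism. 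Interpreting clause (3)(b) as the statement that $\hat\eta_{MX}$ is an isomorphism, the same computation handles that part of clause (3).

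Next I would define
\[
\lambda \;:=\; (M\hat\eta)^{-1} \circ \eta\hat M \colon \hat M \longrightarrow M,
\]
which is legitimate by the previous step. The unit axiom $\lambda \circ \hat\eta = \eta$ is a one-line diagram chase: naturality of $\eta$ at $\hat\eta_X$ rewrites $\eta_{\hat M X} \circ \hat\eta_X$ as $M\hat\eta_X \circ \eta_X$, and the leading $(M\hat\eta_X)^{-1}$ then cancels. The multiplication square is a more intricate but still routine verification using both $M\hat\eta$ and $\hat\eta M$ being isomorphisms. Uniqueness of $\lambda$ among monad maps $\hat M \to M$ follows from the axiom $\lambda' \circ \hat\eta = \eta$ by whiskering and inverting $M\hat\eta$.

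For the terminality half of clause (1), let $(N, \eta^N, v^N)$ be any idempotent monad with a monad map $g\colon N \to M$. Pulling back the free $M$-algebra structure on $MY$ along $g$ endows $MY$ with an $N$-algebra structure, and idempotence of $N$ forces $\eta^N_{MY}$ to be an isomorphism; hence $M$, and therefore by hypothesis $\hat M$, inverts every $N$-equivalence. Since $\hat M$ is the localization at its own class of equivalences, this containment produces a canonical monad map $N \to \hat M$ factoring $g$, unique by the unit axiom. The monomorphism assertion when $\sC$ is complete and well-powered can then be extracted by applying this universal property to the (idempotent) monad arising from the image factorization of $\lambda$, which remains idempotent because source and target invert the same class. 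Finally, (3)(a)$\Leftrightarrow$(c) is immediate from the defining formula for $\lambda$, and (3)(a)$\Leftrightarrow$(b) is the coincidence of equivalence classes applied pointwise to the unit map.

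The main obstacle is the last step of the terminality argument: promoting the pointwise inclusion of classes of equivalences to an actual monad morphism $N \to \hat M$ compatible with multiplications, rather than merely a natural transformation. Everything else is a sequence of naturality and monad-axiom manipulations, made particularly tractable by the two isomorphisms $M\hat\eta$ and $\hat\eta M$, which allow one to freely interchange $M$ and $\hat M$ on either side when chasing diagrams.
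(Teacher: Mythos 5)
The paper gives no proof of this statement; it is quoted from Casacuberta--Frei \cite{casacuberta}, so your attempt has to be judged on its own. Most of the scaffolding is right: the formula $\lambda=(M\hat\eta)^{-1}\circ\eta\hat M$, the verification of the unit axiom, and the uniqueness argument by naturality against $\hat\eta$ are all correct, and your terminality sketch (restrict the free $M$-algebra structure along $g\colon N\to M$ to see that $N$-equivalences are $M$-equivalences) is the right idea. But there are two genuine gaps. The first is at the crux of the whole theorem: your justification that $\hat\eta_{MY}$ is an isomorphism, namely that $MY$ ``carries an $\hat M$-algebra structure pulled back from its $M$-algebra structure,'' is circular. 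Pulling back an algebra structure requires a monad map $\hat M\to M$, which is exactly what you are in the process of constructing from this very fact. The correct argument is that any $M$-algebra, in particular $MY$ with structure map $\mu_Y$, is orthogonal to every $M$-equivalence $f\colon A\to B$ (the extension of $g\colon A\to MY$ is $\mu_Y\circ Mg\circ (Mf)^{-1}\circ\eta_B$, and uniqueness follows by applying $M$ and using $\mu_Y\circ Mh\circ\eta_B=h$); since the $M$-equivalences coincide with the $\hat M$-equivalences and the $\hat M$-local objects are exactly those orthogonal to all $\hat M$-equivalences, $MY$ is $\hat M$-local. This orthogonality step is where the hypothesis does its real work, and it cannot be skipped.

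The second gap is your reading of condition (3)(b). You interpret it as ``$\hat\eta_{MX}$ is an isomorphism,'' but that holds for \emph{every} $X$ by part (2), so under your reading conditions (a)--(c) would all hold unconditionally and $\lambda$ would always be invertible, i.e.\ every monad would be idempotent. Condition (3)(b) is about the unit of $M$ itself: $\eta_{MX}\colon MX\to M^2X$. The equivalence (a)$\Leftrightarrow$(b) then rests on the identities $\mu_X\circ\eta_{MX}=\mathrm{id}=\mu_X\circ M\eta_X$, which show that $\eta_{MX}$ is invertible iff $\mu_X$ is iff $M\eta_X$ is, the last being condition (a) after translating $M$-equivalences into $\hat M$-equivalences; so it is not merely ``the coincidence of equivalence classes applied pointwise.'' Relatedly, (a)$\Leftrightarrow$(c) is not immediate from the formula for $\lambda$: one needs the observation that $\lambda_X$ is a map between $\hat M$-local objects (using again that $MX$ is local), so that being an $\hat M$-equivalence already makes it an isomorphism. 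The remaining sketched items (the multiplication square for $\lambda$, and the monomorphism claim, which is cleanest via Fakir's intersection construction and uniqueness of idempotent approximations) are plausible but would need to be written out.
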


\begin{rmk}
Idempotent approximation was studied previously by Fakir~\cite{fakir}. His main existence theorem says that if $\sC$ is complete and well-powered, then the idempotent approximation to any monad on $\sC$ exists. This construction provides a right adjoint to the natural inclusion of idempotent monads on $\sC$ into $\Monad(\sC)$.
\end{rmk}

\section{Harmonic localization and chromatic completion}

Using the notion of idempotent approximation, we study the relation between harmonic localization and chromatic completion and deduce a new equivalent formulation of the telescope conjecture. We then construct a harmonic spectrum which is not chromatically complete. 

\begin{prop}\mylabel{lhidempapprox}
Harmonic localization is the idempotent approximation to chromatic completion, $\Lh = \hat{\bC}$.
\end{prop}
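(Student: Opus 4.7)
The strategy is to apply the Casacuberta--Frei criterion (Theorem~\ref{idempapproxtest}): since $L_\infty$ is idempotent as a Bousfield localization, it suffices to verify that $L_\infty$ and $\bC$ invert the same class of morphisms of $\Sp$.

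One inclusion is direct. If $f\colon X\to Y$ is such that $K(n)\wedge f$ is an equivalence for every $n\ge 0$, then using the Bousfield class identity $\langle E(n)\rangle = \bigvee_{i\le n}\langle K(i)\rangle$, the map $L_nf$ is an equivalence for every $n$, and hence $\bC f = \lim_nL_nf$ is an equivalence.

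The key structural input for the reverse direction is that $\bC X$ is harmonic for every spectrum $X$. Indeed, a dissonant spectrum $Y$ satisfies $K(n)\wedge Y\simeq\ast$ for all $n$, so by the same Bousfield decomposition $Y$ is $E(n)$-acyclic; hence $[Y,L_nX]_\ast = 0$, and therefore $[Y,\bC X]_\ast = \lim_n[Y,L_nX]_\ast = 0$. Consequently $\bC X$ lies in the harmonic subcategory, so the unit $\eta^{\bC}_X\colon X\to\bC X$ factors canonically as $X\to L_\infty X\xrightarrow{\lambda_X}\bC X$, producing a monad map $\lambda\colon L_\infty\to\bC$.

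The main obstacle is to upgrade this harmonicity into the statement that every $\bC$-equivalence is an $L_\infty$-equivalence. My plan is to combine the factorization $\lambda$ with the complementary functor identity $\bC L_\infty\simeq\bC$ (which follows from $L_nL_\infty\simeq L_n$, itself a consequence of the Bousfield class decomposition above). Applied to a $\bC$-equivalence $f$, the resulting naturality square identifies $\bC f$ with $\bC L_\infty f$ and reduces the claim to the following assertion about harmonic spectra: if $F$ is harmonic with $\bC F\simeq\ast$, then $F\simeq\ast$. I would attack this via the tower $\{L_nF\}$ and the stabilization $K(n)\wedge L_mF\simeq K(n)\wedge F$ for $m\ge n$. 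The delicate feature of the argument is that $\lambda$ is not generically an equivalence, as \myref{counterexample} will explicitly demonstrate, so the proof cannot proceed by naively inverting $\lambda$ but must instead exploit the fine structure of the chromatic tower.
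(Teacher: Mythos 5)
Your setup coincides with the paper's: invoke \myref{idempapproxtest} and show that $\Lh$ and $\bC$ invert the same class of morphisms. The easy inclusion and your structural observations (harmonicity of $\bC X$, the factorization $X \to \Lh X \xrightarrow{\lambda_X} \bC X$, the identity $\bC \Lh \simeq \bC$, and the reduction to showing that a harmonic $F$ with $\bC F \simeq \ast$ is contractible) are all correct. But the reverse inclusion is exactly where your argument stops being a proof and becomes a plan, and the plan has a gap: from $K(n)\wedge L_mF \simeq K(n)\wedge F$ for $m \ge n$ you would need to conclude something about $K(n)\wedge \bC F = K(n)\wedge \lim_m L_mF$, and smashing with $K(n)$ does not commute with the inverse limit over the chromatic tower. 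This is not a technicality to be deferred to ``the fine structure of the chromatic tower''; it is the entire difficulty, and as stated nothing in your outline addresses it.

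The paper's resolution is a short retract trick that sidesteps the limit altogether: for any $Z$ the composite $Z \to \bC Z \to L_nZ$ is the $E(n)$-localization map, hence a $K(n)_*$-isomorphism because $\langle E(n)\rangle \ge \langle K(n)\rangle$; therefore $K(n)_*(Z)$ is a \emph{natural} retract of $K(n)_*(\bC Z)$. If $f$ is a $\bC$-equivalence, then $K(n)_*(\bC f)$ is an isomorphism, and a retract of an isomorphism is an isomorphism, so $K(n)_*(f)$ is an isomorphism for every $n$, i.e., $f$ is a harmonic equivalence. You should replace your tower analysis with this observation; it also disposes of your intermediate claim immediately, since $\bC F \simeq \ast$ forces $K(n)_*F$ to be a retract of $0$ and hence $F$ to be dissonant.
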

\begin{proof}
Since $\Lh$ is a localization functor and thus idempotent, using \myref{idempapproxtest} it suffices to show that the class $\sS(\Lh)$ of harmonic equivalences coincides with the class $\sS(\bC)$ of $\bC$-equivalences. Because $\ker(\Lh) \subseteq \ker(\bC)$, we clearly have $\sS(\Lh) \subseteq \sS(\bC)$. 

Conversely, let $f: X \to Y$ be a $\bC$-equivalence. First note that, for any spectrum $Z$ and $n \ge 0$, the natural composite $Z\to \bC Z \to L_nZ \to L_{K(n)}Z$ exhibits $K(n)_*(Z)$ as a (natural) retract of $K(n)_*(\bC Z)$. Since $K(n)_*(\bC f)$ is an isomorphism for any $n \ge 0$ and the retract of an isomorphism is an isomorphism, the claim follows. 
\end{proof}

Replacing $L_n$ by $L_n^f$, the analogous statement for finite harmonic localization and finite chromatic completion is proven in exactly the same way, so that we get:

\begin{prop}
Finite harmonic localization $\Lh^f$ is the idempotent approximation to finite chromatic completion $\widehat{\bC^f}$.
\end{prop}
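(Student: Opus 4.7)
The plan is to mimic the proof of \myref{lhidempapprox} essentially verbatim, replacing $L_n$ by $L_n^f$ and the Morava $K$-theories $K(n)$ by the telescopes $T(n)$. Since $\Lh^f$ is a Bousfield localization and hence idempotent, \myref{idempapproxtest} reduces the claim to checking that the class $\sS(\Lh^f)$ of finite harmonic equivalences coincides with the class $\sS(\widehat{\bC^f})$ of $\widehat{\bC^f}$-equivalences.

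One inclusion is formal. Because each $L_n^f$ factors through the telescopic localization $L_{T(n)}$, a $\Lh^f$-equivalence is in particular an $L_n^f$-equivalence for every $n \ge 0$, hence a $\widehat{\bC^f}$-equivalence; this gives $\sS(\Lh^f) \subseteq \sS(\widehat{\bC^f})$.

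For the converse, I would reuse the retract argument from \myref{lhidempapprox}. Given a $\widehat{\bC^f}$-equivalence $f\colon X \to Y$ and any spectrum $Z$, the natural composite
\[Z \to \widehat{\bC^f} Z \to L_n^f Z \to L_{T(n)} Z\]
is a $T(n)$-equivalence for every $n \ge 0$, since the overall map is Bousfield localization at $T(n)$. Applying $T(n)_*$ therefore exhibits $T(n)_*(Z)$ as a natural retract of $T(n)_*(\widehat{\bC^f} Z)$. As the retract of an isomorphism is an isomorphism, $T(n)_*(\widehat{\bC^f} f)$ being an equivalence forces $T(n)_*(f)$ to be an equivalence for every $n$, so $f$ is a $\Lh^f$-equivalence.

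The argument is almost purely formal, and no step looks substantially harder than in \myref{lhidempapprox}; the only extra bookkeeping is identifying $\Lh^f$ with Bousfield localization at $\bigvee_n T(n)$ and recording that $L_n^f$ factors through $L_{T(n)}$. Both are standard, so I do not expect a genuine obstacle.
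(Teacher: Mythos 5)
Your proposal is correct and is exactly the paper's approach: the paper simply states that the finite case is ``proven in exactly the same way'' as \myref{lhidempapprox}, replacing $L_n$ by $L_n^f$ (and implicitly $K(n)$ by the telescopes). The only nitpick is that the first inclusion is cleaner via the Bousfield class identity $\langle L_n^f S^0\rangle = \langle \tel(0)\vee\cdots\vee\tel(n)\rangle$ rather than the factorization $L_n^f \to L_{\tel(n)}$, but this is the same level of detail the paper itself supplies.
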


The abstract properties of idempotent approximations imply a useful criterion for checking when a harmonic spectrum is chromatically complete. Let $\lambda\colon \Lh \to \bC$ be the natural (and unique) monad map. The next definition uses part (3) of \myref{idempapproxtest}, and is motivated by the terminology of~\cite[I.5]{holim}.

\begin{defn}
A spectrum $X$ is called $\bC$-good if any of the following equivalent conditions hold:
\begin{enumerate}
 \item The map $\eta_X\colon X \to \bC X$ is a harmonic equivalence
 \item The map $\eta_{\bC X}\colon \bC X \to \bC^2 X$ is an equivalence
 \item The map $\lambda_X\colon \Lh X \to \bC X$ is an equivalence.
\end{enumerate}
Moreover, we denote the cofiber of the natural map $\eta_X$ of (1) by $\bA X$.
\end{defn}

\begin{cor}\mylabel{cccrit}
A harmonic spectrum $X$ is $\bC$-good if and only if it is chromatically complete. In particular, $\Lh X$ is chromatically complete if and only if $\mathbb{A}X$ is dissonant.
\end{cor}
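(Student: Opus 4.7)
The strategy is to exploit the factorization $\eta_X = \lambda_X \circ \hat{\eta}_X$ of the chromatic completion unit through harmonic localization (where $\hat{\eta}$ denotes the unit of $\Lh$), together with the three equivalent characterizations of $\bC$-goodness just recorded.

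For the main equivalence, suppose $X$ is harmonic, so that $\hat{\eta}_X \colon X \to \Lh X$ is an equivalence. Then in the commuting triangle $\eta_X = \lambda_X \circ \hat{\eta}_X$, the map $\eta_X$ is an equivalence if and only if $\lambda_X$ is; that is exactly the statement that $X$ is chromatically complete if and only if $X$ is $\bC$-good, via condition (3) of the definition.

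For the ``in particular'' statement, I would apply the main equivalence to the (automatically harmonic) spectrum $\Lh X$: it is chromatically complete if and only if $\lambda_{\Lh X}\colon \Lh \Lh X \to \bC \Lh X$ is an equivalence. Next, naturality of $\lambda \colon \Lh \to \bC$ along $\hat{\eta}_X \colon X \to \Lh X$ yields a commuting square
\[\xymatrix{\Lh X \ar[r]^{\lambda_X} \ar[d]_{\Lh \hat{\eta}_X} & \bC X \ar[d]^{\bC \hat{\eta}_X} \\ \Lh\Lh X \ar[r]_{\lambda_{\Lh X}} & \bC\Lh X}\]
in which $\Lh \hat{\eta}_X$ is an equivalence by idempotence of $\Lh$ and $\bC \hat{\eta}_X$ is an equivalence by part (2) of \myref{idempapproxtest}. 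Hence $\lambda_{\Lh X}$ is an equivalence if and only if $\lambda_X$ is, i.e., if and only if $X$ itself is $\bC$-good.

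To close, I would rewrite ``$X$ is $\bC$-good'' using condition (1) as ``$\eta_X \colon X \to \bC X$ is a harmonic equivalence''; applying the exact functor $\Lh$ to the defining cofiber sequence $X \to \bC X \to \bA X$ identifies this with the vanishing of $\Lh \bA X$, i.e., with $\bA X$ being dissonant. No single step presents a genuine obstacle: everything is formal manipulation with the natural transformations supplied by the idempotent-approximation machinery, and the only real piece of bookkeeping is the naturality square above that transfers $\lambda_{\Lh X}$ back to $\lambda_X$.
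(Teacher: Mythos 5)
Your proof is correct and follows essentially the same route as the paper: both parts are formal consequences of the idempotent-approximation formalism of \myref{idempapproxtest}, with the first part reducing to the factorization $\eta_X=\lambda_X\circ\hat\eta_X$ and the second to the vanishing of $\Lh\bA X$. The only cosmetic difference is that the paper identifies $\bC\Lh X\simeq \bC X\simeq \Lh\bC X$ directly and reads off the condition, whereas you transfer $\lambda_{\Lh X}$ back to $\lambda_X$ via the naturality square; these are the same computation.
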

\begin{proof}
The first part is an immediate consequence of the previous lemma. To see the second claim, note that $L_{\infty}X$ is chromatically complete if and only if $L_{\infty}X \to \bC L_{\infty}X \simeq \bC X \simeq \Lh\bC X$ is an equivalence, which in turn is equivalent to $\Lh \mathbb{A}X = 0$, hence the claim.
\end{proof}

As another consequence of the theorem, we deduce a new equivalent formulation of the telescope conjecture. This uses the well-known orthogonality relations for the telescopes $\tel(n)$ of finite type $n$ spectra.

\begin{lemma}\mylabel{orthogrelfin}
In the category of spectra, we have the following relations among Bousfield classes, with $\delta_{m,n}$ being the Kronecker delta.
\begin{enumerate}
 \item $\la\tel(n)\ra \ge \la K(n)\ra$ for all $n \in \N$. 
 \item $\la \tel(m) \ra \wedge \la \tel(n) \ra = \delta_{m,n}\la \tel(m)\ra$ for all $n,m \in \N$.
\end{enumerate}
\end{lemma}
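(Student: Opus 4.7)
Fix for each height $k \in \N$ a finite type $k$ spectrum $F_k$ equipped with a $v_k$-self map $v_k\colon \Sigma^{d_k}F_k \to F_k$, so that $\tel(k) \simeq v_k^{-1}F_k$ by definition. The plan is to deduce both claims from the standard package surrounding the nilpotence and periodicity theorems, smashing telescopes against $K(n)$ and against each other and then tracking the Bousfield class of the result.

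\textbf{Part (1).} Smashing with $K(n)$ commutes with the sequential colimit defining the telescope, giving
\[
K(n) \wedge \tel(n) \simeq v_n^{-1}\bigl(K(n) \wedge F_n\bigr).
\]
By the defining property of a $v_n$-self map, $v_n$ acts on $K(n)_*(F_n)$ as multiplication by a unit, so the localization is trivial and $K(n) \wedge \tel(n) \simeq K(n) \wedge F_n$. Since $F_n$ has type exactly $n$, $K(n)_*(F_n)$ is nonzero, and since $K(n)$ is a graded field, $K(n) \wedge F_n$ is a nontrivial wedge of suspensions of $K(n)$. Hence $\la K(n) \wedge F_n \ra = \la K(n) \ra$, and combining yields $\la \tel(n) \ra \ge \la \tel(n) \wedge K(n) \ra = \la K(n) \ra$.

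\textbf{Part (2).} Treat the two cases separately. If $m = n$, pick a second type $n$ model $F_n'$ with $v_n$-self map $v_n'$ (one can just take $F_n' = F_n$); then a Künneth argument in $K(*)$-homology shows that $F_n \wedge F_n'$ again has type exactly $n$, and $v_n \wedge v_n'$ is a $v_n$-self map on it. Consequently $\tel(n) \wedge \tel(n) \simeq (v_n \wedge v_n')^{-1}(F_n \wedge F_n')$ is itself a telescope of a finite type $n$ spectrum along a $v_n$-self map, and by the well-known fact that any two such telescopes have the same Bousfield class, $\la \tel(n) \wedge \tel(n) \ra = \la \tel(n) \ra$. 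If $m \neq n$, assume without loss of generality $m < n$; then $F_m \wedge F_n$ is a finite spectrum of type $\ge n > m$, so by the nilpotence theorem $v_m \wedge \mathrm{id}$ is nilpotent, giving $\tel(m) \wedge F_n \simeq v_m^{-1}(F_m \wedge F_n) = 0$. Writing $\tel(n)$ as the filtered colimit of suspensions of $F_n$ along $v_n$ and commuting smash past colimits yields $\tel(m) \wedge \tel(n) = 0$.

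\textbf{Main obstacle.} No step is deep; the argument is a formal unraveling of the definition of $\tel(n)$ once the nilpotence and periodicity theorems are granted. The only point warranting mild care is the $m = n$ case of (2), where one must verify that $F_n \wedge F_n'$ is again of type exactly $n$ with an honest $v_n$-self map; this is immediate from Künneth for the graded field $K(n)$ and from the definition of a $v_n$-self map, so the whole proof is short.
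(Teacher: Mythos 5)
Your argument is correct. For the record, the paper itself gives no proof of this lemma --- it simply records the relations as ``well-known'' consequences of the nilpotence and periodicity theorems --- so there is nothing to diverge from; your write-up is the standard derivation one would expect to find behind that citation. All the key steps check out: smashing the defining colimit of $\tel(n)$ with $K(n)$ turns the $v_n$-self map into an equivalence, giving $\la \tel(n)\ra \ge \la K(n)\wedge F_n\ra = \la K(n)\ra$; the K\"unneth isomorphism over the graded fields $K(i)_*$ shows $F_m\wedge F_n$ has type $\max(m,n)$ and that $v_n\wedge v_n'$ is again a $v_n$-self map when $m=n$, while for $m\neq n$ the map $v_m\wedge\mathrm{id}$ is nilpotent in every $K(i)_*$ (including $i=\infty$) and hence nilpotent by the nilpotence theorem, killing the telescope. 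The one external input you invoke --- that any two telescopes of $v_n$-self maps on finite type $n$ spectra share a Bousfield class --- is indeed standard (it follows from the thick subcategory theorem together with asymptotic uniqueness of $v_n$-self maps), and citing it is appropriate here.
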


\begin{cor}
The telescope conjecture holds for all heights $m$ if and only if the natural map $\bC^f \to \bC$ is an equivalence. 
\end{cor}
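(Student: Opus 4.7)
\emph{Plan for the proof.} The reverse implication is essentially formal: assuming $\la\tel(n)\ra=\la K(n)\ra$ for every $n$ and using the standard Bousfield equivalence $\la E(n)\ra=\la K(0)\vee\cdots\vee K(n)\ra$, one deduces $L_n^f=L_n$ at every height, so the tower map $L_n^f\to L_n$ is a levelwise equivalence and hence so is the induced map on inverse limits $\bC^f\to\bC$.

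For the forward implication, the plan is to exhibit the telescope conjecture at an arbitrary height $n$ by evaluating both chromatic completions on a well-chosen test spectrum. Fix $n$ and a finite type-$n$ spectrum $V(n)$, and set $\tel(n)=v_n^{-1}V(n)$. Write $T(m):=\tel(0)\vee\cdots\vee\tel(m)$, so that $L_m^f=L_{T(m)}$ by Miller's finite localization theory. Since $\tel(n)\simeq L_n^f V(n)$ is $T(n)$-local, the inclusion of local categories $L_n^f\text{-loc}\subseteq L_m^f\text{-loc}$ for $m\ge n$ (which follows from $\la T(m)\ra\ge\la T(n)\ra$) gives $L_m^f\tel(n)\simeq\tel(n)$ for $m\ge n$; for $m<n$, part (2) of \myref{orthogrelfin} forces $T(m)\wedge\tel(n)\simeq 0$, hence $L_m^f\tel(n)\simeq 0$. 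The resulting tower is constant at $\tel(n)$ for $m\ge n$ and trivial below, so $\bC^f\tel(n)\simeq\tel(n)$.

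For the chromatic side, the two parts of \myref{orthogrelfin} together with $\la E(m)\ra=\la K(0)\vee\cdots\vee K(m)\ra$ give $\tel(n)\wedge K(i)\simeq 0$ for $i\neq n$, whence $L_m\tel(n)\simeq 0$ for $m<n$ and $L_m\tel(n)\simeq L_{K(n)}\tel(n)\simeq L_{K(n)}V(n)$ for $m\ge n$, the last step because $v_n$ acts invertibly on any $K(n)$-local spectrum. Thus $\bC\tel(n)\simeq L_{K(n)}V(n)$, and the hypothesis $\bC^f\simeq\bC$ renders the canonical map
\[
\tel(n)\;\simeq\;\bC^f\tel(n)\xrightarrow{\ \sim\ }\bC\tel(n)\;\simeq\;L_{K(n)}V(n)
\]
an equivalence. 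This is exactly the telescope conjecture at height $n$ for $V(n)$; as $n$ and $V(n)$ were arbitrary, the telescope conjecture holds at every height.

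The main point where attention is required is the computation of the two towers of $\tel(n)$: one must combine Miller's identification of $\tel(n)$ as a finite localization (to pin down $L_m^f\tel(n)$ at and above stage $n$) with the orthogonality of telescopes and Morava $K$-theories of \myref{orthogrelfin} (to kill $L_m^f\tel(n)$ and $L_m\tel(n)$ below stage $n$, and to collapse $L_m\tel(n)$ to $L_{K(n)}V(n)$ above). Once these two computations are in hand, the inverse limit of each tower reduces to the common value at stage $n$, and the hypothesis transfers the two different eventual values into the telescope-conjecture map.
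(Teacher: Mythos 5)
Your argument is correct in outline but takes a genuinely different route from the paper. The paper's proof is almost purely formal: from $\bC^f \simeq \bC$ it deduces, via \myref{lhidempapprox} and its finite analogue (idempotent approximations of equivalent monads agree), that $\Lh^f \simeq \Lh$, hence the Bousfield class identity $\langle \bigvee_n \tel(n)\rangle = \langle \bigvee_n K(n)\rangle$; smashing with $\tel(m)$ and invoking \myref{orthogrelfin} then yields $\langle\tel(m)\rangle = \langle K(m)\rangle$ in two lines. Your proof instead evaluates both towers on the test object $\tel(n)$ and identifies the comparison map with the telescope conjecture map $\tel(n) \to L_{K(n)}V(n)$. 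This buys a concrete witness for each height at the cost of two nontrivial tower computations, and it bypasses the idempotent-approximation machinery entirely; the paper's route is shorter precisely because that machinery converts the hypothesis into a statement about Bousfield classes, where the orthogonality relations do all the work.

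There is, however, one inference in your computation that is invalid as stated: from ``$\tel(n)\wedge K(i) \simeq 0$ for $i \neq n$'' you conclude ``whence $L_m\tel(n) \simeq L_{K(n)}\tel(n)$ for $m \ge n$.'' Acyclicity with respect to the complementary $K(i)$'s does not in general collapse $L_m$ to $L_{K(n)}$: for example $C_0S^0$ is $K(0)$-acyclic, yet $L_1C_0S^0 \simeq M_1S^0 = \fib\bigl(L_{K(1)}S^0 \to L_0L_{K(1)}S^0\bigr)$, which differs from $L_{K(1)}C_0S^0 \simeq L_{K(1)}S^0$ since $\pi_*L_{K(1)}S^0$ is not all torsion. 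The conclusion is nonetheless true for $\tel(n)$, but it needs a real argument: since $L_m$ is smashing, $L_m\tel(n) \simeq v_n^{-1}L_mV(n)$; the standard fact that $L_nF \simeq L_{K(n)}F$ for a finite type-$n$ complex $F$ (a consequence of the chromatic fracture square and the smashing theorem, independent of the telescope conjecture) together with the invertibility of $v_n$ on $K(n)_*V(n)$ handles $m=n$; and for $m>n$ one checks that the monochromatic layers $M_i\tel(n) \simeq M_iS^0 \wedge \tel(n)$ vanish for $i \neq n$ because $\langle M_iS^0\rangle \le \langle K(i)\rangle$. With that step repaired, the rest of your argument (the finite tower being eventually constant at $\tel(n)$, and the identification of the limit comparison map with the localization map $\tel(n)\to L_{K(n)}V(n)$) goes through.
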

\begin{proof}
The \emph{only if} direction is obvious. For the converse, assume that $\bC^f \xrightarrow{\sim} \bC$. By \myref{lhidempapprox}, the natural map $\Lh^f \to \Lh$ is also an equivalence, hence $\langle \bigvee_{n \ge 0} \tel(n) \rangle = \langle \bigvee_{n \ge 0} K(n)\rangle$. Smashing both sides with $\tel(m)$ and using the orthogonality relations from \myref{orthogrelfin}, we get
\[ \langle \tel(m) \rangle =  \langle \tel(m) \wedge \bigvee_{n \ge 0} \tel(n) \rangle = \langle \tel(m) \wedge \bigvee_{n \ge 0} K(n) \rangle = \langle K(m) \rangle \]
for all $m$, i.e., the telescope conjecture at height $m$. 
\end{proof}

\subsection{A counterexample} 

We give an example of a harmonic spectrum that is not chromatically complete, thereby answering Ravenel's \myref{ravenelquestion} in the negative. 

\begin{thm}\mylabel{counterexample}
$\Lh \bigvee_{i\ge0} \Sigma^{i+1}C_iBP$ is harmonic, but not chromatically complete.
\end{thm}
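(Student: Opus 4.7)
The plan is to invoke Corollary~\myref{cccrit}: $\Lh W$ is chromatically complete iff $\bA W := \mathrm{cofib}(W \to \bC W)$ is dissonant. Harmonicity of $\Lh W$ is automatic from idempotence of $\Lh$, so the task reduces to producing some Morava $K$-theory that does not annihilate $\bA W$; I target $K(0) = H\Q$.

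First I would identify $\bC W$ explicitly. Since $BP$ has projective $BP$-dimension $0$, Theorem~\myref{finhomdimcc} shows $BP$ is chromatically complete. The relation $L_n L_i = L_i$ for $n\ge i$ implies that $L_i BP$ is $\bC$-local, and hence so is the fiber $C_i BP$. Because $C_i BP$ is $L_i$-acyclic, the summand $\Sigma^{i+1}C_iBP$ of $W$ is $L_n$-acyclic for $i \ge n$, so $L_n W \simeq \prod_{i<n} \Sigma^{i+1} L_n C_iBP$ (finite wedge coincides with finite product in $\Sp$). The transition $L_{n+1}W \to L_nW$ kills only the top factor, so the tower splits as a product of subtowers $\{L_n C_iBP\}_{n>i}$, each with limit $C_iBP$; since limits commute with products, this yields $\bC W \simeq \prod_{i\ge 0}\Sigma^{i+1}C_iBP$.

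Next, applying $L_0$ to $C_iBP \to BP \to L_iBP$ and using $L_0 L_i = L_0$ gives $L_0 C_iBP = 0$; hence every summand of $W$ is rationally trivial, $K(0)_*W = 0$, and the cofiber long exact sequence reduces to $K(0)_*\bA W \cong K(0)_*\bC W = \pi_*\bC W \otimes \Q$. Since $\pi_k \bC W = \prod_i \pi_{k-i-1}(C_iBP)$ and $\pi_k W = \bigoplus_i \pi_{k-i-1}(C_iBP)$ with the natural inclusion as the comparison map, the cofiber becomes
\[
\pi_k \bA W \;\cong\; \prod_i \pi_{k-i-1}(C_iBP) \,/\, \bigoplus_i \pi_{k-i-1}(C_iBP),
\]
and it suffices to find a $k$ for which this quotient has infinite $p$-order.

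I would take $k = 0$: then $\pi_{-1}(C_0 BP) \cong \Q/\Z_{(p)}$ by the long exact sequence of $C_0 BP \to BP \to BP_{\Q}$, and $\pi_{-i-1}(C_iBP) \cong \pi_{-i}(L_iBP)$ for $i \ge 1$. The shift $\Sigma^{i+1}$ in the definition of $W$ is calibrated precisely so that these target degrees fall in the range where the chromatic structure of $L_iBP$---accessed, for instance, via the fracture square for $L_iS^0$ together with the smashing identification $L_iBP \simeq BP \wedge L_iS^0$---contributes Brown--Comenetz-type $p$-torsion of arbitrary finite order. Selecting $\alpha_i \in \pi_{-i-1}(C_iBP)$ with $p$-order at least $p^{i+1}$, the tuple $(\alpha_i)$ has infinite order in $\prod_i \pi_{-i-1}(C_iBP)$ and lies outside $\bigoplus$; its class in $\pi_0 \bA W$ thus survives $-\otimes \Q$, proving $K(0)_0 \bA W \ne 0$. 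Hence $\bA W$ is not dissonant and $L_\infty W$ is not chromatically complete. The delicate step is justifying the required unbounded $p$-torsion in $\pi_{-i}(L_iBP)$ for $i \ge 1$; everything else is formal.
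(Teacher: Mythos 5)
Your overall strategy matches the paper's: identify $\bC W$ with the product $\prod_i \Sigma^{i+1}C_iBP$, observe that under this identification $W \to \bC W$ becomes the inclusion of the wedge into the product, and detect the failure of $\bA W$ to be dissonant rationally, i.e., by exhibiting non-torsion elements in the degreewise quotient of the product by the sum. Your route to the identification of $\bC W$ is in fact a clean alternative to the paper's: where the paper runs an explicit $\lim$/$\lim^1$ computation on homotopy groups using the chromatic tower of short exact sequences and the vanishing of the maps $\pi_*N_{n}BP \to \pi_*N_{n-1}BP$, you use that $L_n$ is smashing to reduce $L_nW$ to a finite product, that each $C_iBP$ is chromatically complete (as the fiber of a map between chromatically complete spectra, via \myref{finhomdimcc} and $L_nL_i=L_i$), and an interchange of limits. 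That half of the argument is correct.

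The gap is exactly where you flag it, and it is not a minor loose end: the entire content of the counterexample is the existence of a single degree $k$ together with classes $\alpha_i \in \pi_k\Sigma^{i+1}C_iBP$ of unbounded $p$-power order, and you do not produce them. Appealing to ``Brown--Comenetz-type torsion'' in $\pi_{-i}L_iBP$ accessed through fracture squares is not an argument, and it points at the wrong mechanism. The paper supplies the missing input via the chromatic resolution: $\Sigma^{i+1}C_iBP \simeq N_{i+1}BP$ has $\pi_*N_{i+1}BP \cong BP_*/(p^\infty,v_1^\infty,\ldots,v_i^\infty)$, and these modules visibly contain, already in degree $0$, elements of order $p^a$ for every $a$ --- for instance $v_{i+1}^c/(p^a v_1^{b_1}v_2\cdots v_i)$ for suitable exponents $c,b_1$, nonzero because $v_{i+1}^c$ is not divisible by $v_1,\ldots,v_i$ in the polynomial ring $BP_*/p^a$. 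Choosing $\alpha_i$ of order $p^i$ then gives a non-torsion element of $\pi_0\bA W$, hence $K(0)_*\bA W \neq 0$. Without this identification of the homotopy of the wedge summands, or some substitute for it, your argument does not establish the theorem.
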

\begin{proof}
To simplify notation, recall that $N_{i+1}BP = \Sigma^{i+1}C_iBP$. We claim that 
\begin{equation}\label{claim}
\bC \bigvee_i N_iBP \simeq \prod_i N_iBP.
\end{equation}
Since
\[\pi_{*}\bigvee_i N_iBP \cong \bigoplus_i BP_*/(p^{\infty},\ldots,v_{i-1}^{\infty}), \ \pi_{*}\prod_i N_iBP \cong \prod_i BP_*/(p^{\infty},\ldots,v_{i-1}^{\infty}) \]
it then follows that $\pi_*\mathbb{A}X$ contains non-torsion elements and thus is not dissonant. Therefore, $\Lh \bigvee_i N_iBP$ is not chromatically complete by \myref{cccrit}. 

In order to prove \eqref{claim}, consider the cofiber sequence
\[\bigvee_i N_iBP \longrightarrow L_{n}\bigvee_i N_iBP \longrightarrow \Sigma C_n \bigvee_i N_iBP,\]
which gives rise to an inverse system of short exact sequences
\[\xymatrixcolsep{1pc}\xymatrix{& \vdots \ar[d] & \vdots \ar[d] & \vdots \ar[d] & \\
0 \ar[r] & \bigoplus_{i \le n}\pi_*N_iBP \ar[r] \ar[d] & \bigoplus_{i\le n}\pi_*L_nN_iBP \ar[r] \ar[d] & \bigoplus_{i\le n}\pi_{*-1}N_nBP \ar[r] \ar[d] & 0\\
0 \ar[r] & \bigoplus_{i\le n-1}\pi_*N_iBP \ar[r] \ar[d] & \bigoplus_{i\le n-1}\pi_*L_{n-1}N_iBP \ar[r] \ar[d] & \bigoplus_{i\le n-1}\pi_{*-1}N_{n-1}BP \ar[r] \ar[d] & 0\\
& \vdots & \vdots & \vdots &}\]
using $C_nC_i = C_n$ for $i\le n$. Clearly the left vertical arrows are surjective, so we get $\lim^1_n(\bigoplus_{i \le n}\pi_*N_iBP)=0$. Since the natural map $\pi_*N_nBP \to \pi_*N_{n-1}BP$ is zero, the right vertical maps are zero, hence 
\[\lim_n \bigoplus_{i\le n}\pi_{*-1}N_nBP = 0 = {\lim_n^1}\bigoplus_{i\le n}\pi_{*-1}N_nBP.\]
It follows by the long exact sequence for inverse limits that
\[\lim_n \pi_*L_n\bigvee_i N_iBP \cong \lim_n \bigoplus_{i\le n}\pi_*L_nN_iBP \cong \lim_n\bigoplus_{i \le n}\pi_*N_iBP \cong \prod_i \pi_* N_iBP\]
and similarly 
\[{\lim_n^1} \pi_*L_n\bigvee_i N_iBP \cong {\lim_n^1}\bigoplus_{i \le n}\pi_*N_iBP = 0.\]
Therefore, the Milnor sequence shows that
\[\pi_*\bC \bigvee_i N_iBP \cong \lim_n \pi_*L_n\bigvee_i N_iBP \cong \pi_*\prod_i N_iBP\]
verifying \eqref{claim}.
\end{proof}

\begin{cor}
Chromatic completion is not idempotent. 
\end{cor}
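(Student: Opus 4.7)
The plan is to derive this corollary from \myref{counterexample} by a short contradiction argument, using only the abstract properties of idempotent approximation established in \myref{idempapproxtest} and the identification of $\Lh$ with $\hat{\bC}$ from \myref{lhidempapprox}.

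First I would suppose, for contradiction, that $\bC$ is idempotent. Then $\bC$ is itself an idempotent monad equipped with the identity monad map $\bC \to \bC$, and it trivially inverts the same class of morphisms as itself. By the characterization of idempotent approximation in \myref{idempapproxtest}, this forces $\hat{\bC} \simeq \bC$ as monads on $\Sp$.

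Next I would combine this with \myref{lhidempapprox}, which identifies $\hat{\bC}$ with harmonic localization $\Lh$. Chaining the two equivalences yields $\Lh \simeq \bC$ as endofunctors of $\Sp$, which means a spectrum is harmonic if and only if it is chromatically complete. Applying this to the spectrum $Y = \Lh \bigvee_{i \ge 0} N_iBP$ constructed in \myref{counterexample}, which is harmonic by construction, would imply that $Y$ is chromatically complete — directly contradicting the conclusion of that theorem.

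I do not expect any serious obstacle: once \myref{counterexample} is in hand, the only thing to verify is the formal fact that the idempotent approximation to an idempotent monad is the monad itself, which is immediate from the uniqueness clause of \myref{idempapproxtest}. The real content of the corollary is entirely concentrated in \myref{counterexample}; the deduction here is a one-line application of the universal property of $\hat{(-)}$.
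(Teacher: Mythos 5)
Your argument is correct and is exactly the deduction the paper intends: its proof of this corollary reads ``Immediate from \myref{lhidempapprox} and \myref{counterexample},'' and your write-up simply spells out the same contradiction (an idempotent $\bC$ would be its own idempotent approximation, forcing $\Lh \simeq \bC$ and hence that every harmonic spectrum is chromatically complete, contradicting \myref{counterexample}). No gaps; the real content is indeed all in \myref{counterexample}.
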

\begin{proof}
Immediate from \myref{lhidempapprox} and \myref{counterexample}.
\end{proof}

We still do not know whether all suspension spectra are chromatically complete but, based on this example, suspect that there are counterexamples. Moreover, we believe that the collection of chromatically complete spectra is not closed under infinite products.

\bibliographystyle{alpha}
\bibliography{bibliography}

\end{document}